\theoremstyle{plain}
\newtheorem{theorem}{Theorem}[section]
\newtheorem{proposition}[theorem]{Proposition}
\newtheorem{lemma}[theorem]{Lemma}
\theoremstyle{definition}
\newtheorem{definition}[theorem]{Definition}
\theoremstyle{remark}
\newtheorem{remark}[theorem]{Remark}
\theoremstyle{example}
\newtheorem{example}[theorem]{Example}
\newcommand{\N}{\mathbb{N}}
\newcommand{\R}{\mathbb{R}}
\newcommand{\bx}{\bm{x}}
\newcommand{\by}{\bm{y}}
\newlength{\dhatheight}
\newcommand{\dbtilde}[1]{\accentset{\approx}{#1}}
\title{Scalable Signature Kernel Computations via\\ Local Neumann Series Expansions}
\author{%
  Matthew Tamayo-Rios \\
  Seminar for Applied Mathematics \& AI Center\\
  ETH Zurich, Switzerland\\
  \And
  Alexander Schell \\
  Department of Mathematics \\
  Technical University of Munich, Germany \\
  \AND
  Rima Alaifari \\
  Department of Mathematics\\
  RWTH Aachen University, Germany
}
\begin{document}

\maketitle

\begin{abstract}
  The signature kernel \cite{kiraly2019kernels} is a recent state-of-the-art tool for analyzing high-dimensional sequential data, valued for its theoretical guarantees and strong empirical performance. In this paper, we present a novel method for efficiently computing the signature kernel of long, high-dimensional time series via adaptively truncated recursive local power series expansions. Building on the characterization of the signature kernel as the solution of a Goursat PDE \cite{salvi2021signature}, our approach employs tilewise Neumann‐series expansions to derive rapidly converging power series approximations of the signature kernel that are locally defined on subdomains and propagated iteratively across the entire domain of the Goursat solution by exploiting the geometry of the time series. Algorithmically, this involves solving a system of interdependent Goursat PDEs via adaptively truncated local power series expansions and recursive propagation of boundary conditions along a directed graph in a topological ordering. This method strikes an effective balance between computational cost and accuracy, achieving substantial performance improvements over state-of-the-art approaches for computing the signature kernel. It offers (a) adjustable and superior accuracy, even for time series \mbox{with very high roughness; (b)} drastically reduced memory requirements; and (c) scalability to efficiently handle very long time series (one million data points or more) on a single GPU. As demonstrated in our benchmarks, these advantages make our method particularly well-suited for rough-path-assisted machine learning, financial modeling, and signal processing applications involving very long and highly volatile \mbox{sequential data.}
\end{abstract}

\section{Introduction}
Time series data is ubiquitous in contemporary data science and machine learning, appearing in diverse applications such as satellite communication, radio astronomy, health monitoring, climate analysis, and language or video processing, among many others \cite{wang2024deep}. The sequential nature of this data presents unique challenges, as it is characterised by temporal dependencies and resulting structural patterns that must be captured efficiently to model and predict time-dependent systems and phenomena with accuracy. Robust and scalable tools for handling such data in their full temporal complexity are thus essential for advancing machine learning applications across these domains. One particularly powerful approach in this direction is the \emph{signature kernel} \cite{kiraly2019kernels, salvi2021signature}, the Gram matrix of a high-fidelity feature embedding rooted in rough path theory and stochastic analysis \cite{lyons1998differential, friz2010multidimensional}, which has gained relevance as an increasingly popular tool in the modern analysis of sequential data \cite{lee2023signature, lyons2022signature}. 

Conceptually, the signature kernel (of a family of multidimensional time series) is the Gram matrix of the \emph{signature transform}, a highly informative and faithful feature map that embeds time series into a Hilbert space via their iterated integrals, thus uniquely capturing their geometry and essential time-global characteristics in a hierarchical manner \cite{hambly2010uniqueness}. This structure situates the signature kernel naturally within the framework of reproducing kernel Hilbert spaces (RKHS) and enables its practical utility through kernel learning techniques. The rich intrinsic properties of the underlying signature transform further confer several strong theoretical properties to the signature kernel, including invariance under irregular sampling and reparametrization, universality and characteristicness, and robustness to noise \cite{chevyrev2022signature}. Its wide-ranging theoretical interpretability combined with its strong real-world efficiency have elevated the signature kernel to a state-of-the-art tool for the analysis of time-dependent data \cite{lyons2022signature}.

However, existing methods for computing the signature kernel suffer from significant scalability issues, particularly when dealing with long or highly variable time series.\footnote{Variability of a (discrete-time) time series is quantified by the sum of squared differences between consecutive points of the time series and referred to as its `roughness'.} The reason is that these methods typically either approximate truncations of the signature transforms via dynamic programming \cite{kiraly2019kernels} or solve a global Goursat PDE for the signature kernel using two‐dimensional FDM discretizations \cite{salvi2021signature}, which both involve at least quadratic storage complexity relative to the length $\ell$ of the time series, thus resulting in prohibitive memory usage for long or very rough time series. These limitations, and the quadratic complexity in the number of constituent time series (a common kernel method bottleneck not specific to the signature kernel), can severely obstruct the application of signature kernels to large-scale, real-world datasets; see, e.g., \citep{toth2025user, lee2023signature} and the references therein.

To address this, we introduce a novel approach to compute the signature kernel based on adaptively truncated local (tilewise) Neumann series expansions of the solution to its characterizing Goursat PDE; see Figure \ref{fig:illustration} for an overview. Partitioning the domain $[0,1]^2$ of this PDE solution into tiles according to the data points of the input time series, we derive rapidly convergent power series representations of the solution on each tile, with boundary data obtained from adjacent tiles. This enables efficient computation of the signature kernel in terms of memory and runtime, while ensuring superior accuracy and significantly improved scalability with respect to the length and roughness of the kernel's constituent time series. Since only local expansions are stored---rather than a global $\ell\times\ell$ grid---memory usage grows much more slowly with $\ell$, allowing the method to handle very long time series, even up to a million points each on a single GPU, where PDE or dynamic-programming-based solvers typically run out of memory. By explicitly leveraging the time-domain geometry of piecewise-linear time series in this way, our method supports localized computations of the signature kernel that are both highly parallelizable and memory-efficient, even for very long and rough time series.

Specifically, our main contributions are:
\begin{enumerate}
    \item \textbf{Neumann Series for the Signature Kernel:} We propose a tilewise integral-equation approach to construct recursive local power series expansions of the signature kernel, based on boundary-to-boundary propagation and paired with an adaptive truncation strategy. This leverages the kernel's time-domain geometry to enable its computation with significantly reduced memory requirements and computational cost without compromising accuracy.
    \item \textbf{Parallelizable Local Computation:} Exploiting the piecewise-linear structure of the input time series, we partition the full kernel domain into ordered tiles supporting parallel local Neumann expansions with adjustable precision and minimal global communication.
    \item \textbf{Scalability to Very Long and Rough Time Series:} Our method, termed \texttt{PowerSig}, achieves scalability to very long (over $10^6$ points on a single GPU) and highly volatile (rough) time series, addressing key limitations of existing methods.
    \item \textbf{Empirical Validation On Benchmarks:} We demonstrate the practical advantages of our method through comprehensive benchmarks against several state-of-the-art signature kernel solvers, demonstrating superior accuracy, runtime, and memory efficiency of our approach.
\end{enumerate}

\noindent
The remainder of this paper elaborates on these contributions, starting with a brief review of related work (Section \ref{sec:related-work}), followed by a detailed description of our methodology (Section \ref{sec:methodology}) and a presentation of our experimental results (Section \ref{sec:experiments}). Proofs are given in Appendix A.

The codebase for our method, including all implementation details, is provided in the supplementary material and publicly available at: \href{https://github.com/geekbeast/powersig}{\texttt{https://github.com/geekbeast/powersig}}.

\subsection{Related Work}\label{sec:related-work}
The computation of Gram matrices for signature-transformed time series was first systematically studied by Király and Oberhauser \cite{kiraly2019kernels}, who identified the signature kernel as a foundational link between rough path theory, data science, and machine learning---an interplay broadly envisioned earlier by Lyons \cite{lyons2014rough}. Chevyrev and Oberhauser \cite{chevyrev2022signature} further extended these theoretical foundations, and introduced a statistically robust variant of the signature kernel via appropriate time series scaling. Computationally, Salvi et al.\ \cite{salvi2021signature} significantly advanced beyond earlier dynamic programming approaches for computing truncated kernels \cite{kiraly2019kernels} by characterizing the (untruncated) signature kernel through a linear second-order hyperbolic (Goursat) PDE. This approach, implemented in widely-used libraries such as \texttt{sigkernel} and GPU-accelerated \texttt{KSig} \cite{toth2025user}, provided efficient signature kernel computation through finite-difference PDE approximations. 

However, PDE-based methods, despite providing highly parallelizable and fast routines for short- to moderate-length time series, exhibit poor scalability due to quadratic memory usage, becoming impractical for long or rough time series beyond a few thousand to tens of thousands of time steps \citep{toth2025user, lee2023signature} (our experiments confirm computational limits of approximately $10^3$ steps for \texttt{sigkernel} and $16\times 10^4$ for \texttt{KSig} on 4090 RTX GPUs).  Dynamic programming remains an option for computing kernels of truncated signatures \cite{kiraly2019kernels}, although it likewise suffers from poor scalability. Recent efforts to reduce this cost through random Fourier features or other low-rank approximations \cite{toth2023random} often degrade in accuracy for larger time-series length or time series of higher dimension. 

Our proposed method, \texttt{PowerSig}, circumvents these issues through tilewise local power series expansions of the signature kernel, avoiding the need for a global storage footprint that scales quadratically with time series length. By viewing the Goursat PDE as a Volterra integral equation, we can compute rapidly convergent Neumann series expansions of the kernel locally, storing only series coefficients rather than full two-dimensional arrays. This localized strategy enables fast and accurate signature kernel computations for extremely long (over $10^6$ points) and high-dimensional time series on single GPUs, significantly improving scalability and efficiency. 

During the preparation of this manuscript, we came across a preprint by Cass et al.\ \cite{cass2025numerical} which appeared concurrently to the release of our first version. Their work is also based on recursive power series expansions of the signature kernel, albeit on different mathematical and algorithmic premises. An empirical and conceptual comparison between our method and theirs is provided in Section \ref{supp:vs-polysig}.

\section{Signature Kernels via Recursive Local Neumann Series}\label{sec:methodology}
This section presents our tilewise Neumann-series expansion method for the signature kernel. We begin by revisiting the PDE characterization of the signature kernel \cite{salvi2021signature} (Section \ref{subsect:signaturekernel}) and recast it as an equivalent integral equation that allows for a recursive, tile-based decomposition of its solution with only boundary values exchanged between adjacent tiles (Section \ref{subsect:sigkernel-neumann}). This local formulation produces rapidly convergent nested power series expansions on each tile and supports an adaptive truncation scheme to balance accuracy and computational cost (Section \ref{sec:computing-neumann-coefficients}). By storing and passing only local series coefficients, the method achieves substantial memory savings compared to global PDE or dynamic-programming solvers (Section \ref{sec:experiments}). Figure \ref{fig:illustration} summarizes the core idea.

\subsection{The Signature Kernel}\label{subsect:signaturekernel}
Let $\bm{x}=(\bx_1, \ldots, \bx_{\ell})\subset\R^d$ and $\by= (\by_1, \ldots, \by_{\ell})\subset\R^d$ be time series of common length $\ell\in\N$ (if lengths differ, one may pad the shorter time series by repeating its final entry). For any such time series $\bm{z}\coloneqq(\bm{z}_1,\ldots, \bm{z}_\ell)$, define its \emph{affine interpolant} $\hat{\bm{z}} : [0,1]\rightarrow\R^d$ by
\begin{equation*}
\hat{\bm{z}}(t)\coloneqq \bm{z}_k + \left(\ell - 1\right)\left(t - \tfrac{k-1}{\ell-1}\right)\Delta_k \bm{z}, \quad t\in\left[\tfrac{k-1}{\ell-1}, \tfrac{k}{\ell-1}\right], \quad(k=1, \ldots, \ell-1),
\end{equation*}
where $\Delta_k \bm{z}\coloneqq\bm{z}_{k+1} - \bm{z}_{k}$. (This is just the unique continuous piecewise linear function interpolating the points $\bm{z}_1,\ldots, \bm{z}_\ell$.) The derivative of $\hat{\bm{z}}$ (defined almost everywhere  on $[0,1]$) is given by
\begin{equation}\label{method:lininterpderiv}
\hat{\bm{z}}'(t)\coloneqq \left(\ell-1 \right)\Delta_k \bm{z}, \quad t\in\left(\tfrac{k-1}{\ell-1}, \tfrac{k}{\ell-1}\right).
\end{equation}
For later use, we now partition the unit square $[0,1]^2$ into \emph{tiles} as follows. Set
\begin{equation}
\sigma_k\coloneqq\frac{k-1}{\ell-1}\quad\text{and}\quad\tau_l\coloneqq\frac{l-1}{\ell-1} \quad \left(k,l=1,\dots,\ell\right),
\end{equation}
and define the open tiles and their closures (in the Euclidean topology on $\R^2$) as
\begin{equation}\label{eqn:tile}
\begin{gathered}
\mathcal{D}_{k,l}\!\coloneqq\!\left(\sigma_k, \sigma_{k+1}\right)\!\times\!\left(\tau_l, \tau_{l+1}\right) \quad\text{and}\quad T_{k,l}\coloneqq \overline{\mathcal{D}_{k,l}}=\left[\sigma_k, \sigma_{k+1}\right]\!\times\!\left[\tau_l, \tau_{l+1}\right].
\end{gathered}
\end{equation}
The tiled domain is then given by $\mathcal{D}\coloneqq\bigcup_{k,l=1}^{\ell-1}\mathcal{D}_{k,l}$, with boundary $\partial \mathcal{D} = [0,1]^2\setminus\mathcal{D}$.

Finally, let $\langle\cdot, \cdot\rangle$ denote the Euclidean inner product on $\R^d$. Building on the PDE characterization in \citep[Theorem 2.5]{salvi2021signature}, we can\footnote{Introducing the signature kernel of $\bx$ and $\by$ as the solution to its characterizing PDE circumvents the need to specify it as the inner product of the signature transforms of $\bx$ and $\by$, as originally formulated in \cite{kiraly2019kernels, salvi2021signature}.} introduce our main object of interest as follows. 

\begin{definition}[Signature Kernel]\label{def:signaturekernel}The \emph{signature kernel} of $\bx$ and $\by$ is the unique continuous function $K \equiv K_{\bx, \by} : [0,1]^2\rightarrow\R$ solving the hyperbolic (Goursat) boundary value problem
\begin{equation}\label{eqn:GoursatPDE}
\left\{
\begin{aligned}
\frac{\partial^2 K(s,t)}{\partial s \partial t} &= \rho_{\bx, \by}(s,t)K(s,t), \quad (s,t)\in\mathcal{D},\\
K(0,\cdot) &= K(\cdot,0) = 1,
\end{aligned}
\right.
\end{equation}
where the coefficient function $\rho_{\bx, \by} : \mathcal{D}\rightarrow \R$ is defined tilewise by
\begin{equation}\label{method:rho}
\begin{aligned}
\rho_{\bx, \by}(s,t)\coloneqq \langle\hat{\bx}'(s), \hat{\by}'(t)\rangle, \quad(s,t)\in\mathcal{D}_{k,l}.   
\end{aligned}
\end{equation}
\end{definition}
An equivalent formulation defines $\rho_{\bx,\by}$ on all of $[0,1]^2$ via 
\begin{equation}\label{eqn:rho-all-tiles}
\rho_{\bx,\by} : [0,1]^2\ni(s,t)\mapsto(\ell-1)^2\!\sum_{k,l=1}^\ell\!\rho_{k,l}\mathbbm{1}_{\mathcal{D}_{k,l}}(s,t), \quad \rho_{k,l}\coloneqq \Delta_l \bm{x} \cdot \Delta_k \bm{y}. 
\end{equation}
In integral form, the boundary value problem \eqref{eqn:GoursatPDE} is then equivalent to the Volterra integral equation
\begin{equation}\label{eqn:Volterra}
K(s,t) = 1 + \int_{0}^{t}\!\!\int_{0}^{s}\rho_{\bm{x},\bm{y}}(u,v)\,K(u,v)\,\mathrm{d}u\,\mathrm{d}v, \quad(s,t)\in[0,1]^2.
\end{equation}
Standard fixed-point arguments (Appendix \ref{pf:sigkernel-welldefined}) guarantee that \eqref{eqn:Volterra} has a unique solution in $C([0,1]^2)$, the space of continuous functions on $[0,1]^2$. This yields the following well-known result: 

\begin{proposition}\label{prop:sigkernel-welldefined}
The Goursat problem \eqref{eqn:GoursatPDE} has a unique solution in $C([0,1]^2)$; in particular, the signature kernel of $\bx$ and $\by$ is well-defined.
\end{proposition}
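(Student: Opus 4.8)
The plan is to establish existence and uniqueness for the Volterra equation \eqref{eqn:Volterra} by a fixed-point argument in $C([0,1]^2)$, and then to verify that \eqref{eqn:Volterra} and the Goursat problem \eqref{eqn:GoursatPDE} are equivalent; the proposition then follows at once. Since $\hat{\bx}'$ and $\hat{\by}'$ are piecewise constant with finitely many values, the coefficient $\rho_{\bx,\by}$ is bounded; write $M\coloneqq\sup_{\mathcal D}|\rho_{\bx,\by}|<\infty$. I would introduce the integral operator $\mathcal T:C([0,1]^2)\to C([0,1]^2)$ defined by $(\mathcal TK)(s,t)\coloneqq 1+\int_0^t\!\int_0^s\rho_{\bx,\by}(u,v)K(u,v)\,du\,dv$, and first check that $\mathcal T$ is well defined, i.e.\ that $\mathcal TK$ is continuous whenever $K$ is. This is immediate from dominated convergence, using $|\rho_{\bx,\by}K|\le M\|K\|_\infty$ and the continuity of the map $(s,t)\mapsto\mathbbm{1}_{[0,s]\times[0,t]}$ in $L^1([0,1]^2)$.

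Next, by induction on $n$ one obtains the pointwise estimate
\[
\bigl|(\mathcal T^nK_1-\mathcal T^nK_2)(s,t)\bigr|\le\frac{M^n(st)^n}{(n!)^2}\,\|K_1-K_2\|_\infty\le\frac{M^n}{(n!)^2}\,\|K_1-K_2\|_\infty,\qquad(s,t)\in[0,1]^2,
\]
the base case being $|\mathcal TK_1-\mathcal TK_2|(s,t)\le M\int_0^t\!\int_0^s|K_1-K_2|(u,v)\,du\,dv\le Mst\,\|K_1-K_2\|_\infty$ and the inductive step following by inserting the bound for $n-1$ into the integral. Since $\sum_{n\ge0}M^n/(n!)^2<\infty$, the factor $M^n/(n!)^2$ is $<1$ for all large $n$, so some power $\mathcal T^n$ is a contraction on the Banach space $(C([0,1]^2),\|\cdot\|_\infty)$. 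Weissinger's fixed-point theorem (equivalently, the Picard iterates $\mathcal T^k(1)$ form a uniformly Cauchy sequence) then yields a unique fixed point $K\in C([0,1]^2)$, i.e.\ \eqref{eqn:Volterra} has a unique continuous solution. A useful by-product is that this solution is the limit of the Neumann/Picard series $\sum_{k}\bigl(\mathcal T^{k}(1)-\mathcal T^{k-1}(1)\bigr)$, which foreshadows the tilewise expansions developed later in the paper.

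It remains to show that $K\in C([0,1]^2)$ solves \eqref{eqn:Volterra} if and only if it solves \eqref{eqn:GoursatPDE}. If $K$ satisfies \eqref{eqn:Volterra}, then putting $s=0$ or $t=0$ gives $K(0,\cdot)=K(\cdot,0)=1$; since $v\mapsto\rho_{\bx,\by}(s,v)K(s,v)$ is bounded and measurable, $\partial_sK(s,t)=\int_0^t\rho_{\bx,\by}(s,v)K(s,v)\,dv$ holds off the grid lines, and on each open tile $\mathcal D_{\mu,\nu}$, where $\rho_{\bx,\by}$ is \emph{constant}, differentiating once more gives $\partial_s\partial_tK=\rho_{\bx,\by}K$ classically, which is \eqref{eqn:GoursatPDE}. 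Conversely, if $K\in C([0,1]^2)$ solves \eqref{eqn:GoursatPDE}, then $\partial_s\partial_tK=\rho_{\bx,\by}K$ is defined a.e.\ and bounded on $\mathcal D$, and applying the fundamental theorem of calculus in each variable on each tile and summing over tiles — the continuity of $K$ guaranteeing that the interior grid-line contributions telescope — recovers $K(s,t)=K(0,t)+K(s,0)-K(0,0)+\int_0^s\!\int_0^t\partial_u\partial_vK(u,v)\,dv\,du$, which in view of the boundary data is precisely \eqref{eqn:Volterra}.

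The fixed-point part (Steps 1--2) is entirely routine. The delicate point is the equivalence in Step 3: because $\rho_{\bx,\by}$ jumps across the grid lines $\{\sigma_\mu\}\times[0,1]$ and $[0,1]\times\{\tau_\nu\}$, no globally $C^2$ solution can exist, and the PDE in \eqref{eqn:GoursatPDE} must be read — as Definition \ref{def:signaturekernel} does — on the open tiled domain $\mathcal D$ only. Showing that a continuous function satisfying the PDE tilewise, together with the boundary values, is forced to satisfy the \emph{global} integral equation \eqref{eqn:Volterra} is exactly where the continuity of $K$ across tile interfaces is essential, so that no boundary terms are lost when integrating tile by tile. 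Once this is in hand, uniqueness in $C([0,1]^2)$ for \eqref{eqn:Volterra} transfers immediately to \eqref{eqn:GoursatPDE}, proving the proposition.
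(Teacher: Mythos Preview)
Your proof is correct and follows essentially the same route as the paper: both recast \eqref{eqn:GoursatPDE} as the Volterra equation \eqref{eqn:Volterra} and establish existence/uniqueness by a fixed-point argument, the paper invoking Lemma~\ref{lem:spectralradius} (spectral radius zero, hence convergent Neumann series) where you instead derive the sharper iterate bound $M^n(st)^n/(n!)^2$ directly and appeal to Weissinger. Your treatment is in fact more complete, since you also spell out the equivalence between \eqref{eqn:GoursatPDE} and \eqref{eqn:Volterra}---in particular the tilewise integration and telescoping across grid lines---which the paper simply asserts.
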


The advantage of the Volterra formulation \eqref{eqn:Volterra} is that it naturally leads to a recursive power series expansion for the signature kernel $K$, as we will now explain. A key step towards this observation, which is also used in the proof of Proposition \ref{prop:sigkernel-welldefined}, is the following lemma:

\begin{lemma}\label{lem:spectralradius}
For any bounded, measurable function $\varrho: I\rightarrow\R$ defined on a closed rectangle $I\equiv[a_1,b_1]\times[a_2,b_2]\subseteq[0,1]^2$, the integral operator $\bm{T}_{\varrho} : (C(I),\|\cdot\|_{\infty})\rightarrow (C(I),\|\cdot\|_{\infty})$ given by
\begin{equation}\label{lem:spectralradius:eq1}
\bm{T}_{\varrho}f (s,t) = \int_{a_2}^{t}\!\int_{a_1}^{s}\!\varrho(u,v)\,f(u,v)\,\mathrm{d}u\,\mathrm{d}v, \quad (s,t)\in I,
\end{equation}
has spectral radius zero, that is, $r(\bm{T}_\varrho)\coloneqq \sup_{\lambda \in \sigma(\bm{T}_\varrho)} |\lambda| =0$. Here, $C(I)$ denotes the space of continuous functions on $I$, equipped with the supremum norm $\|f\|_{\infty}:= \sup_{(s,t) \in I} |f(s,t)|.$
\end{lemma}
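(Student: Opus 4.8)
The plan is to estimate the operator norms of the iterates $\bm{T}_\varrho^n$ and then invoke Gelfand's spectral‑radius formula $r(\bm{T}_\varrho) = \lim_{n\to\infty}\|\bm{T}_\varrho^n\|^{1/n}$. First I would record that $\bm{T}_\varrho$ is a well‑defined bounded linear operator on $(C(I),\|\cdot\|_\infty)$: for $f\in C(I)$ the function $h\coloneqq\varrho\, f$ is bounded and measurable, and $(s,t)\mapsto\int_{[a_1,s]\times[a_2,t]}h$ is continuous by dominated convergence (the indicators $\mathbbm{1}_{[a_1,s]\times[a_2,t]}$ converge pointwise a.e.\ under $(s,t)\to(s_0,t_0)$ and are dominated by $\mathbbm{1}_I\in L^1$), while $\|\bm{T}_\varrho f\|_\infty\le\|\varrho\|_\infty\,|I|\,\|f\|_\infty$ with $|I|=(b_1-a_1)(b_2-a_2)$. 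This places us in the standard Banach‑space setting where the spectral‑radius formula applies.

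The core estimate is the claim that, writing $M\coloneqq\|\varrho\|_\infty$, one has for every $n\ge 1$, every $f\in C(I)$ and every $(s,t)\in I$
\[
\bigl|\bm{T}_\varrho^n f(s,t)\bigr|\ \le\ \|f\|_\infty\, M^n\,\frac{(s-a_1)^n(t-a_2)^n}{(n!)^2}.
\]
I would prove this by induction on $n$. The base case $n=1$ is immediate from \eqref{lem:spectralradius:eq1}. For the inductive step, write $\bm{T}_\varrho^{n+1}f=\bm{T}_\varrho(\bm{T}_\varrho^n f)$, insert the inductive hypothesis into the integrand, and evaluate the elementary double integral $\int_{a_2}^{t}\!\int_{a_1}^{s}(u-a_1)^n(v-a_2)^n\,\mathrm{d}u\,\mathrm{d}v=\frac{(s-a_1)^{n+1}(t-a_2)^{n+1}}{(n+1)^2}$; since $\frac{1}{(n!)^2(n+1)^2}=\frac{1}{((n+1)!)^2}$, the induction closes.

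Taking the supremum over $(s,t)\in I$ and using $b_i-a_i\le 1$ (as $I\subseteq[0,1]^2$) gives $\|\bm{T}_\varrho^n\|\le M^n/(n!)^2$. Hence $\|\bm{T}_\varrho^n\|^{1/n}\le M/(n!)^{2/n}\to 0$ as $n\to\infty$, because $(n!)^{1/n}\to\infty$ (e.g.\ by Stirling, or from $n!\ge (n/2)^{n/2}$). By Gelfand's formula this yields $r(\bm{T}_\varrho)=0$, which is the assertion.

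I do not expect a serious obstacle; this is the classical Volterra‑operator argument. The only points needing a line of care are (i) the continuity of $\bm{T}_\varrho f$ when $\varrho$ is merely bounded and measurable, which is exactly the dominated‑convergence argument above, and (ii) the bookkeeping that produces the denominator $(n!)^2$ rather than $n!$ — it is the two‑dimensional nested integration that squares the factorial, and although a single factorial would already suffice to force the limit to zero, it is worth noting explicitly where the extra factor comes from. If one wishes to work over a real Banach space, one passes to the complexification of $C(I)$, on which the same bound on the iterates holds verbatim, so the conclusion is unchanged.
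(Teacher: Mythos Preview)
Your argument is correct; it is the classical Volterra iterate bound combined with Gelfand's formula, and every step (continuity of $\bm{T}_\varrho f$ via dominated convergence, the inductive $(n!)^{-2}$ estimate, the passage to complexification) is handled cleanly.

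The paper takes a genuinely different route. Rather than bounding $\|\bm{T}_\varrho^n\|$ and appealing to the spectral-radius formula, it shows directly that every $\lambda\neq 0$ lies in the resolvent set: for each $F\in C(I)$ it runs a Picard iteration for the equation $(\lambda\,\mathrm{id}-\bm{T}_\varrho)K=F$, establishes an exponential Cauchy estimate of the form $|K_{n+1}-K_n|\le 2^{-n}\|F\|_\infty e^{\beta(s+t)}$, deduces existence and uniqueness of a continuous solution, and then invokes the inverse mapping theorem to conclude that $(\lambda\,\mathrm{id}-\bm{T}_\varrho)^{-1}$ is bounded. Your approach is shorter and more transparent, and it yields the explicit decay rate $\|\bm{T}_\varrho^n\|\le M^n/(n!)^2$, which is precisely the quantity driving the rapid convergence of the tilewise Neumann series that the rest of the paper exploits (cf.\ Lemma~\ref{lem:single-tile-solution}). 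The paper's approach, by contrast, produces the resolvent directly and so makes the invertibility of $\mathrm{id}-\bm{T}_\varrho$ (needed in Propositions~\ref{prop:sigkernel-welldefined} and~\ref{prop:recursion}) immediate without a separate appeal to Neumann-series convergence, at the cost of a longer argument and a somewhat looser exponential bound.
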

This fact will justify the use of Neumann series when inverting operators of the form $\mathrm{id} - \bm{T}_\varrho$ below.

\begin{figure}[htbp]
\centering
 \centering
 \hspace*{-2.75em}
    \includegraphics[width=1.1\textwidth]{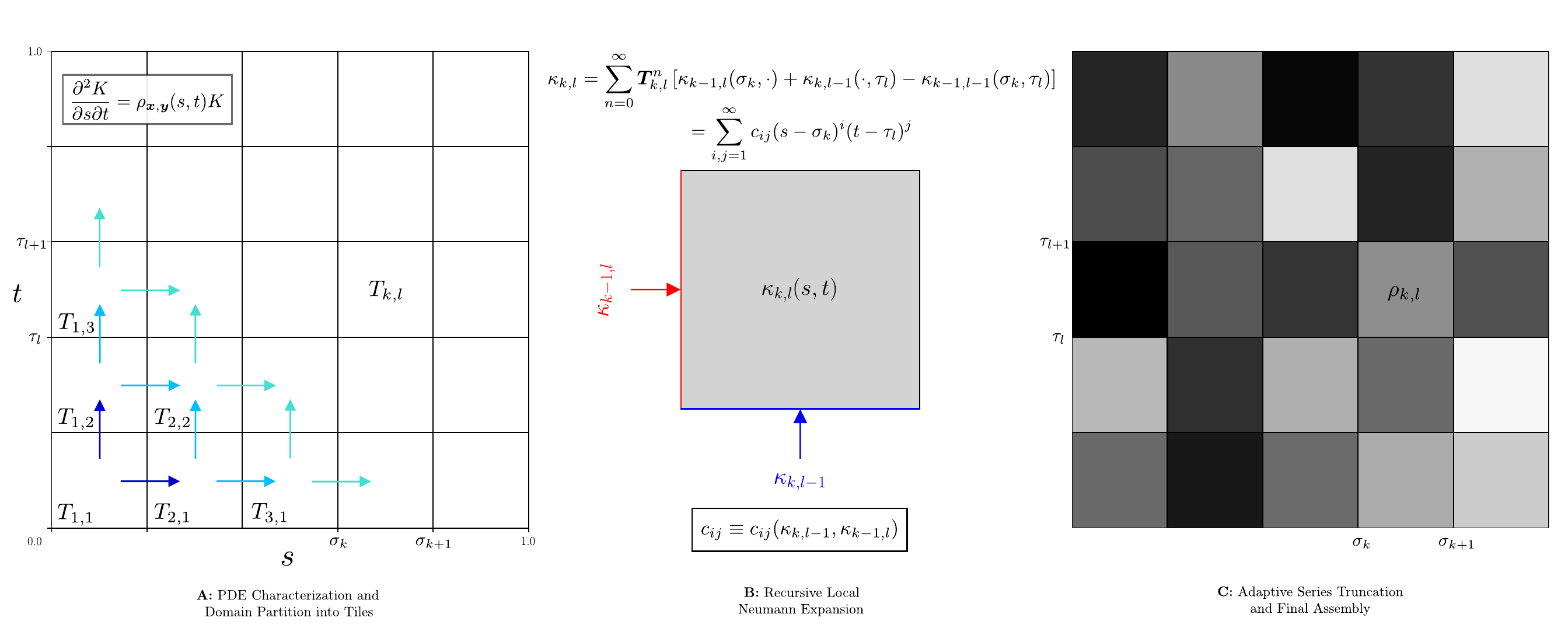} 
\caption{\emph{Summary of our method (\texttt{PowerSig}) for computing the signature kernel of two time series via recursive local Neumann expansions.
\textbf{Panel A}: The PDE $\frac{\partial^2 K}{\partial s \partial t}=\rho_{\bm{x},\bm{y}}K$ induces an $(\bm{x}, \bm{y})$-dependent partition of $[0,1]^2$ into tiles $T_{k,l}$. Arrows indicate the sequential propagation of boundary conditions across tiles, with decreasing colour intensity corresponding to later propagation steps. Tiles receiving arrows of the same colour form groups whose local series can be computed in parallel. \textbf{Panel B}: On each given tile $T_{k,l}$, the kernel admits the recursive local Neumann series expansion: $\kappa_{k,l}(s,t)=\sum_{n=0}^{\infty}T_{k,l}^n[\kappa_{k-1,l}(\sigma_k,\cdot)+\kappa_{k,l-1}(\cdot,\tau_l)-\kappa_{k-1,l-1}(\sigma_k,\tau_l)]=\sum_{i,j=1}^{\infty}c_{ij}(s-\sigma_k)^i(t-\tau_l)^j$, which converges uniformly on $(s,t)\in T_{k,l}$. 
These tilewise expansions depend on boundary values from neighbouring tiles ($\kappa_{k-1,l}$ and $\kappa_{k,l-1}$), with arrows indicating the directions of integration from boundaries to the tile interior. \textbf{Panel C} illustrates adaptive series truncation and final kernel assembly. Tile shading intensity encodes local truncation depth, which is adaptively determined by the magnitude of $\rho_{k,l}\equiv(\bm{x}_{k+1}-\bm{x}_k)(\bm{y}_{l+1}-\bm{y}_l)$. Darker tiles indicate the necessity for deeper (higher-order) expansions, while lighter tiles allow shallower truncation.}}
\label{fig:illustration}
\end{figure}

\subsection{A Recursive Local Power Series Expansion of the Signature Kernel}\label{subsect:sigkernel-neumann}
Our computational strategy is to approximate the signature kernel $K$ by a directed family of local power series expansions constructed recursively over the tiles \eqref{eqn:tile}. In the spirit of the Adomian Decomposition Method (ADM) \cite{adomian1984new, adomian2013solving, wazwaz1995decomposition, ahmad2015exact, birem2024goursat}, we seek to establish a representation
\begin{equation}\label{eqn:adomian}
K(s,t) = \sum_{i, j=0}^\infty K_{i,j}(s,t)
\end{equation}
with terms $K_{i,j} : [0,1]^2\rightarrow\R$ that are easy to compute and, for our purposes, take the form 
\begin{equation}\label{eqn:adomian-series-tilewise}
K_{i,j}(s,t) = \sum_{k,l=1}^{\ell-1}\mathbbm{1}_{\hat{T}_{k,l}\!}(s,t)\,c_{k,l}^{(i,j)}\,(s-\sigma_k)^{i}(t-\tau_l)^{j}
\end{equation}
with some tile-dependent coefficient sequences
\begin{equation}\label{eqn:series-coeffs}
c_{k,l}\coloneqq\big(c_{k,l}^{(i,j)} \,\big|\, (i, j) \in\N_0^2\big)\in\ell_1(\N_0^2),
\end{equation}
which are summable and (for $(k,l)\neq (1,1)$) defined recursively by
\begin{equation}\label{eqn:series-coeffs_general-recursion}
c_{k,l} = \phi_{k,l}\big(c_{k,l-1}, c_{k-1,l}\big)\, \quad\text{for maps}\quad \phi_{k,l} : \ell_1(\N_0^2)^{\times 2}\rightarrow\ell_1(\N_0^2)
\end{equation}
where $\phi_{k,1}$ (resp.\ $\phi_{1,l}$) depends only on its second (resp.\ first) argument. 

The sets $\hat{T}_{k,l}$ are half-open, $\hat{T}_{k,l}\coloneqq\big[\sigma_k, \sigma_{k+1}\big)\!\times\!\big[\tau_l, \tau_{l+1}\big)$ for $k, l < \ell-1$, and closed on the last row/column (i.e.\ when $k =\ell-1$ or $l = \ell-1$), $\hat{T}_{k,\ell-1}\coloneqq T_{k,\ell-1}, \hat{T}_{\ell-1,l}\coloneqq T_{\ell-1,l}$. Thus $(\hat{T}_{k,l})_{k,l=1}^{\ell-1}$ defines a partition of the Goursat domain $[0,1]^2$.

We organize the recursion \eqref{eqn:series-coeffs_general-recursion} over all $(\ell-1)^2$ tiles, computing the coefficients \eqref{eqn:series-coeffs} on each tile $T_{k,l}$ via a Neumann expansion from boundary data on the adjacent tiles $T_{k-1,l}, T_{k,l-1}$ (whose coefficients ($c_{k-1,l}$ and $c_{k,l-1}$) are already available). Section \ref{sect:ADM-multitile} details the procedure, beginning with the bottom-left tile $T_{1,1}$ in Section \ref{sect:ADM-singletile}.

The goals of this scheme are twofold: (a) to choose the coefficients \eqref{eqn:series-coeffs} so that the series \eqref{eqn:adomian-series-tilewise} converge rapidly on each tile, giving a power series representation of the signature kernel localizations
\begin{equation}\label{eqn:kappas}
\kappa_{k,l}\coloneqq\left.K\right|_{T_{k,l}} \,:\, [\sigma_k,\sigma_{k+1}]\times[\tau_l,\tau_{l+1}] \ni (s,t)\longmapsto K(s,t)\in\R,
\end{equation}
and (b) to truncate these tilewise series expansions \eqref{eqn:kappas} so as to obtain a numerically stable and efficient global approximation of the whole kernel $K$; see Section \ref{sec:computing-neumann-coefficients} for both.

\subsubsection{Rapidly Convergent Power Series on the First Tile}\label{sect:ADM-singletile}
On the first tile $T_{1,1}= [0,\sigma_2]\times[0,\tau_2]$, we adopt \eqref{eqn:adomian}--\eqref{eqn:series-coeffs} as an ansatz and assume\footnote{This is an assumption only for the moment -- we will establish \eqref{eqn:adomian_first-tile} as a provable identity in Lemma \ref{lem:single-tile-solution}.}
\begin{equation}\label{eqn:adomian_first-tile}
\kappa_{1,1}(s,t) = \sum_{i,j=0}^\infty K_{i,j}(s,t), \quad\text{where}\quad K_{i,j}(s,t)=c_{1,1}^{(i,j)}s^{i}t^{j}
\end{equation} 
with $\sum_{i,j}|c_{1,1}^{(i,j)}| < \infty$ and only diagonal coefficients nonzero ($c_{1,1}^{(i,j)}=0$ for $i\neq j$). Then $\int_{T_{1,1}}\!\sum_{i,j}|K_{i,j}(w)|\,\mathrm{d}w <\infty$, and Fubini applied to the integral equation \eqref{eqn:Volterra} gives
\begin{equation}\label{eqn:coefficients_first-tile0}
\sum_{i,j=0}^\infty K_{i,j} = \sum_{i,j=0}^\infty\tilde{K}_{i,j} \quad\text{with} \quad \tilde{K}_{i,j}(s,t)\coloneqq\int_0^t\!\!\int_0^s\!\rho_{\bm{x},\bm{y}}(u,v)\,K_{i-1,j-1}(u,v)\,\mathrm{d}u\,\mathrm{d}v
\end{equation}
and for $\tilde{K}_{0,0}\equiv 1$ pointwise on \ $T_{1,1}$. Since $\left.\rho_{\bm{x},\bm{y}}\right|_{T_{1,1}}\!\equiv\rho_{1,1}$ (see \eqref{eqn:rho-all-tiles}), a simple induction yields
\begin{equation}
\tilde{K}_{i,j}(s,t) = \begin{cases}
\frac{\rho_{1,1}^{i}}{(i!)^2}\cdot s^{i} t^{j} & \text{if } i=j, \\
\hspace{2.4em}0 & \text{if } i \neq j,
\end{cases} \quad\text{pointwise on $T_{1,1}$.} 
\end{equation}
By the identity theorem for power series, \eqref{eqn:coefficients_first-tile0} implies that the desired $(c^{(i,j)}_{1,1}\mid i,j \in\N_0)$ must read
\begin{equation}\label{eqn:coeffs-first-tile}
c^{(i,j)}_{1,1} = \frac{\rho_{1,1}^{i}}{(i!)^2}\cdot\delta_{i,j}\,,
\end{equation}
where $\delta_{i,j}$ is the Kronecker delta.
On $T_{1,1}$, the decomposition ansatz \eqref{eqn:adomian_first-tile} thus yields the well-known
\begin{lemma}\label{lem:single-tile-solution}
On the first tile $T_{1,1}$, the signature kernel \eqref{eqn:Volterra} has the form:
\begin{equation}\label{lem:single-tile-solution:eq1}
K(s,t) = \sum_{i=0}^\infty \frac{\rho_{1,1}^i}{(i!)^2}s^it^i = \begin{cases} J_0\left(2\sqrt{|\rho_{1,1}|\,s\,t}\right), &\rho_{1,1} < 0,\\ 
I_0\left(2\sqrt{\rho_{1,1}\,s\,t}\right), &\rho_{1,1}\geq 0,
\end{cases}\quad\text{uniformly in } \ (s,t)\in T_{1,1},   
\end{equation}
where $J_0$ and $I_0$ are the Bessel and modified Bessel functions of the first kind of order $0$, respectively.
\end{lemma}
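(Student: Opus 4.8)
The plan is to bypass the formal Adomian ansatz and instead solve the Volterra equation \eqref{eqn:Volterra} on the first tile directly via a Neumann series (using Lemma \ref{lem:spectralradius}), then recognize the resulting power series as a Bessel function. By Proposition \ref{prop:sigkernel-welldefined}, the signature kernel $K$ is the unique continuous solution of \eqref{eqn:Volterra}; the first step is to observe that its restriction $\kappa_{1,1}=K|_{T_{1,1}}$ solves a \emph{self-contained} equation on $T_{1,1}=[0,\sigma_2]\times[0,\tau_2]$. Indeed, for $(s,t)\in T_{1,1}$ the rectangle $[0,s]\times[0,t]$ is contained in $T_{1,1}$, on whose interior $\rho_{\bm{x},\bm{y}}\equiv\rho_{1,1}$ is constant by \eqref{eqn:rho-all-tiles}; hence \eqref{eqn:Volterra} restricts to $\kappa_{1,1}=\mathbf{1}+\rho_{1,1}\,\bm{T}_{1}\kappa_{1,1}$, i.e.\ $(\mathrm{id}-\rho_{1,1}\bm{T}_{1})\kappa_{1,1}=\mathbf{1}$ in $\big(C(T_{1,1}),\|\cdot\|_\infty\big)$, where $\bm{T}_{1}$ denotes the operator \eqref{lem:spectralradius:eq1} with $\varrho\equiv 1$ and $I=T_{1,1}$.

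Second, since $r(\rho_{1,1}\bm{T}_{1})=|\rho_{1,1}|\,r(\bm{T}_{1})=0<1$ by Lemma \ref{lem:spectralradius} (trivially so if $\rho_{1,1}=0$), the operator $\mathrm{id}-\rho_{1,1}\bm{T}_{1}$ is boundedly invertible with inverse given by the norm-convergent Neumann series, so $\kappa_{1,1}=\sum_{n\ge 0}\rho_{1,1}^n\,\bm{T}_{1}^n\mathbf{1}$. A one-line induction based on $\int_0^t\!\int_0^s u^n v^n\,\mathrm{d}u\,\mathrm{d}v=\frac{(st)^{n+1}}{(n+1)^2}$ gives $\bm{T}_{1}^n\mathbf{1}(s,t)=\frac{(st)^n}{(n!)^2}$, whence $\kappa_{1,1}(s,t)=\sum_{n\ge 0}\frac{\rho_{1,1}^n}{(n!)^2}(st)^n$, the series converging uniformly on $T_{1,1}$ because it is dominated by $\sum_{n\ge 0}|\rho_{1,1}|^n/(n!)^2<\infty$ (using $st\le 1$). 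In particular this confirms that the coefficients \eqref{eqn:coeffs-first-tile} and the identity \eqref{eqn:adomian_first-tile} are genuinely correct rather than a provisional ansatz.

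Third, I would identify the series with Bessel functions: from the standard expansions $I_0(z)=\sum_{n\ge 0}\frac{(z/2)^{2n}}{(n!)^2}$ and $J_0(z)=\sum_{n\ge 0}\frac{(-1)^n(z/2)^{2n}}{(n!)^2}$, substituting $z=2\sqrt{\rho_{1,1}\,st}$ when $\rho_{1,1}\ge 0$ (so that $(z/2)^{2n}=(\rho_{1,1}st)^n$) yields $\kappa_{1,1}=I_0\!\big(2\sqrt{\rho_{1,1}\,st}\big)$; for $\rho_{1,1}<0$, writing $\rho_{1,1}^n=(-1)^n|\rho_{1,1}|^n$ and taking $z=2\sqrt{|\rho_{1,1}|\,st}$ gives $\kappa_{1,1}=J_0\!\big(2\sqrt{|\rho_{1,1}|\,st}\big)$, as claimed.

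\textbf{Main obstacle.} The routine calculations -- the induction for $\bm{T}_{1}^n\mathbf{1}$, the dominating-series bound for uniform convergence, and the Bessel substitution -- are all elementary. The only point demanding care is the very first step: the observation that the restriction of the \emph{global} solution to $T_{1,1}$ satisfies the \emph{reduced}, constant-coefficient Volterra equation on $T_{1,1}$, which is what legitimizes working tilewise. This rests on the fact that for $(s,t)\in T_{1,1}$ the double integral over $[0,s]\times[0,t]$ never leaves $T_{1,1}$, combined with the a.e.-constancy of $\rho_{\bm{x},\bm{y}}$ there. This reduction is also precisely the template for the multi-tile recursion of Section \ref{sect:ADM-multitile}, where the analogous restriction instead produces an \emph{inhomogeneous} Volterra equation whose forcing term encodes the boundary data inherited from the already-processed neighbouring tiles.
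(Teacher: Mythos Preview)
Your argument is correct. It differs from the paper's appendix proof, which takes a verification route: it \emph{posits} the candidate series $k_{1,1}(s,t)=\sum_{i\ge 0}\frac{\rho_{1,1}^i}{(i!)^2}(st)^i$, checks uniform absolute convergence on $[0,1]^2$, differentiates term-by-term to see that $\partial_s\partial_t k_{1,1}=\rho_{1,1}k_{1,1}$ on the interior of $T_{1,1}$, and then invokes uniqueness (Proposition~\ref{prop:sigkernel-welldefined}, localized to the tile) to conclude $k_{1,1}|_{T_{1,1}}=K|_{T_{1,1}}$. Your proof is instead \emph{constructive}: you restrict the Volterra equation to $T_{1,1}$, invert $\mathrm{id}-\rho_{1,1}\bm{T}_1$ via the Neumann series guaranteed by Lemma~\ref{lem:spectralradius}, and compute the iterates explicitly. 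Both are short and elementary; the verification route avoids even the one-line induction on $\bm{T}_1^n\mathbf{1}$, while your route has the advantage that it derives the coefficients rather than guessing them and is exactly the template the paper itself uses later (see Example~\ref{example1}, where the identity $\kappa_{1,1}=\sum_k\bm{T}_{1,1}^k\mathbf{1}$ with $(\bm{T}_{1,1}^k\mathbf{1})(s,t)=(\rho_{1,1}st)^k/(k!)^2$ is recomputed in precisely your way).
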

\noindent
The truncation error decays as $\mathcal{O}((n!)^{-2})$ in the order $n$, making the series \eqref{lem:single-tile-solution:eq1} highly effective for approximating $\kappa_{1,1}$, especially when $|\rho_{1,1}|$ is moderate (larger $|\rho_{1,1}|$ need higher truncation orders).

\subsubsection{Recursive Neumann Series for Propagating the Signature Kernel Across All Tiles}\label{sect:ADM-multitile}
The recursion \eqref{eqn:series-coeffs_general-recursion} for the local power series coefficients $c_{k,l}\equiv(c_{k,l}^{(i,j)})\in \ell_1(\N^2_0)$ on the remaining tiles starts from the base coefficients \eqref{eqn:coeffs-first-tile} and proceeds as follows.

For $k,l=1,\ldots, \ell-1$, define the (propagation) operators $\bm{T}_{k,l} : C(T_{k,l})\rightarrow C(T_{k,l})$ by 
\begin{align}\label{eqn:integral-decomposition-operator}
(\bm{T}_{k,l}f)(s,t) = \int_{\tau_l}^{t}\!\int_{\sigma_k}^{s}\!\rho_{k,l}f(u,v)\,\mathrm{d}u\,\mathrm{d}v, \quad (s,t)\in T_{k,l}
\end{align}
(cf.\ Lemma \ref{lem:spectralradius}), and set $T_{0,l}\coloneqq\{0\}\times[\tau_l,\tau_{l+1}]$ and $T_{k,0}\coloneqq[\sigma_k,\sigma_{k+1}]\times\{0\}$.

\begin{proposition}\label{prop:recursion}
For each $k,l=1,\ldots, \ell-1$, the restricted kernel $\kappa_{k,l}= \left.K\right|_{T_{k,l}}$ from \eqref{eqn:kappas} satisfies
 \begin{equation}\label{eqn:recursion-on-tiles}
\kappa_{k,l} = \sum_{n=0}^\infty\bm{T}_{k,l}^n\big(\kappa_{k-1,l}^{(\sigma_k,\,\cdot\,)} + \kappa_{k,l-1}^{(\,\cdot\,,\tau_l)} - \kappa_{k-1,l-1}^{(\sigma_k,\tau_l)}\big) \quad\text{uniformly on \ $T_{k,l}$},
\end{equation}
for $\kappa_{k,l}^{(\sigma,\tau)}\equiv\kappa_{k,l}(\sigma,\tau)$ and the `curried' functions $\kappa_{k,l}^{(\sigma,\,\cdot\,)} : T_{k,l}\ni (u,v)\mapsto \kappa_{k,l}(\sigma,v)$ and $\kappa_{k,l}^{(\,\cdot\,,\tau)} : T_{k,l}\ni (u,v)\mapsto \kappa_{k,l}(u,\tau)$. Here, $\kappa_{0,l}\coloneqq\left.K\right|_{T_{0,l}}$ and $\kappa_{k,0}\coloneqq\left.K\right|_{T_{k,0}}$ and $\kappa_{0,0}\equiv K(0,0)=1$.
\end{proposition}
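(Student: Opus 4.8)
The plan is to derive, for each fixed pair $(\mu,\nu)$, a \emph{single-tile} Volterra fixed-point equation for $\kappa_{\mu,\nu}$ whose inhomogeneity is exactly the boundary combination $g_{\mu,\nu}\coloneqq \kappa_{\mu-1,\nu}^{(\sigma_\mu,\,\cdot\,)} + \kappa_{\mu,\nu-1}^{(\,\cdot\,,\tau_\nu)} - \kappa_{\mu-1,\nu-1}^{(\sigma_\mu,\tau_\nu)}$ appearing on the right-hand side of \eqref{eqn:recursion-on-tiles}, and then to invert $\mathrm{id}-\bm{T}_{\mu,\nu}$ via a Neumann series. No induction over tiles is needed: the statement is a pointwise identity for the genuine solution $K$ of \eqref{eqn:Volterra} restricted to $T_{\mu,\nu}$, so it follows directly from the global integral equation together with continuity of $K$ (the topological ordering over tiles matters only to the \emph{algorithm}, not to this proof).

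\textbf{Step 1 (localisation).} Set $F(s,t)\coloneqq\int_0^t\!\int_0^s\rho_{\bm{x},\bm{y}}(u,v)K(u,v)\,\mathrm du\,\mathrm dv$, so that $K=1+F$ by \eqref{eqn:Volterra}. For $(s,t)\in T_{\mu,\nu}$ one has $s\ge\sigma_\mu$ and $t\ge\tau_\nu$, so splitting the $u$-integral at $\sigma_\mu$ and then the $v$-integral at $\tau_\nu$ gives, using only linearity of the integral,
\[
K(s,t)-K(\sigma_\mu,t)-K(s,\tau_\nu)+K(\sigma_\mu,\tau_\nu)=\int_{\tau_\nu}^{t}\!\int_{\sigma_\mu}^{s}\rho_{\bm{x},\bm{y}}(u,v)K(u,v)\,\mathrm du\,\mathrm dv
\]
(the four constant $1$'s from $K=1+F$ cancel). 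Since $(s,t)\in T_{\mu,\nu}$, the integration rectangle $[\sigma_\mu,s]\times[\tau_\nu,t]$ lies inside $T_{\mu,\nu}$, on which $\rho_{\bm{x},\bm{y}}$ coincides almost everywhere with the constant $\rho_{\mu,\nu}$ (cf.\ \eqref{eqn:rho-all-tiles} and \eqref{eqn:integral-decomposition-operator}); hence the right-hand side equals $\rho_{\mu,\nu}\int_{\tau_\nu}^{t}\!\int_{\sigma_\mu}^{s}\kappa_{\mu,\nu}(u,v)\,\mathrm du\,\mathrm dv=(\bm{T}_{\mu,\nu}\kappa_{\mu,\nu})(s,t)$. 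Continuity of $K$ across the shared edges of adjacent tiles identifies the three boundary terms as restrictions of $K$ on the neighbouring tiles: $K(\sigma_\mu,t)=\kappa_{\mu-1,\nu}(\sigma_\mu,t)$ for $t\in[\tau_\nu,\tau_{\nu+1}]$, $K(s,\tau_\nu)=\kappa_{\mu,\nu-1}(s,\tau_\nu)$ for $s\in[\sigma_\mu,\sigma_{\mu+1}]$, and $K(\sigma_\mu,\tau_\nu)=\kappa_{\mu-1,\nu-1}(\sigma_\mu,\tau_\nu)$, with the degenerate cases $\mu=1$ and/or $\nu=1$ absorbed by the conventions $\kappa_{0,\nu}=K|_{T_{0,\nu}}$, $\kappa_{\mu,0}=K|_{T_{\mu,0}}$, $\kappa_{0,0}=1$. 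Regarding these three terms as functions on $T_{\mu,\nu}$ via the currying of \eqref{eqn:recursion-on-tiles}, their sum is exactly $g_{\mu,\nu}$, so we arrive at the fixed-point equation $\kappa_{\mu,\nu}=g_{\mu,\nu}+\bm{T}_{\mu,\nu}\kappa_{\mu,\nu}$ in $C(T_{\mu,\nu})$; here $g_{\mu,\nu}\in C(T_{\mu,\nu})$ because it is a sum of curried restrictions of the continuous function $K$, and $\kappa_{\mu,\nu}\in C(T_{\mu,\nu})$ by Proposition \ref{prop:sigkernel-welldefined}.

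\textbf{Step 2 (Neumann inversion).} The operator $\bm{T}_{\mu,\nu}$ of \eqref{eqn:integral-decomposition-operator} is precisely the operator $\bm{T}_\varrho$ of Lemma \ref{lem:spectralradius} with $I=T_{\mu,\nu}$ and $\varrho\equiv\rho_{\mu,\nu}$ (a bounded measurable function on $T_{\mu,\nu}$), so $r(\bm{T}_{\mu,\nu})=0$; in particular $\limsup_{k\to\infty}\|\bm{T}_{\mu,\nu}^k\|^{1/k}=0<1$, so the Neumann series $\sum_{k\ge0}\bm{T}_{\mu,\nu}^k$ converges in the operator norm on $C(T_{\mu,\nu})$ and equals a bounded inverse of $\mathrm{id}-\bm{T}_{\mu,\nu}$. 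Applying this inverse to both sides of the fixed-point equation from Step 1 yields $\kappa_{\mu,\nu}=\sum_{k=0}^{\infty}\bm{T}_{\mu,\nu}^k g_{\mu,\nu}$, with the series converging in $\|\cdot\|_\infty$, i.e.\ uniformly on $T_{\mu,\nu}$ — which is exactly \eqref{eqn:recursion-on-tiles}.

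\textbf{Main obstacle.} The only genuinely delicate step is Step 1: the bookkeeping of the domain splittings so that the boundary data of the localised problem get matched to the restrictions of $K$ on the \emph{correct} neighbouring tiles (including the degenerate edge tiles $T_{0,\nu}$, $T_{\mu,0}$ and the corner), together with the justification that the piecewise-constant $\rho_{\bm{x},\bm{y}}$ may be replaced by the single constant $\rho_{\mu,\nu}$ on the sub-rectangle. The latter is immediate, since the discrepancy is supported on a Lebesgue-null set and hence invisible to the integral; Step 2 is then a routine application of Lemma \ref{lem:spectralradius}.
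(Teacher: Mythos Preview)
Your proof is correct and follows essentially the same approach as the paper: derive the local identity $(\mathrm{id}-\bm{T}_{\mu,\nu})\kappa_{\mu,\nu}=g_{\mu,\nu}$ by splitting the global Volterra integral \eqref{eqn:Volterra} at $(\sigma_\mu,\tau_\nu)$, identify the boundary terms with restrictions of $K$ to the adjacent tiles, and then invert via the Neumann series using Lemma~\ref{lem:spectralradius}. Your presentation is slightly more explicit about the integral splitting (the ``four $1$'s cancel'' remark) and about why continuity of $K$ lets you read the boundary data off the neighbouring tiles, but the argument is otherwise identical to the paper's.
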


\noindent
The identities in \eqref{eqn:recursion-on-tiles} yield the desired tilewise power-series representation \eqref{eqn:adomian}--\eqref{eqn:adomian-series-tilewise}. On each tile, the coefficients \eqref{eqn:series-coeffs} are determined recursively from those on the tiles immediately to the left and below. The following example illustrates this.

\begin{example}[Evaluating \eqref{eqn:recursion-on-tiles} on $T_{1,1}$, $T_{1,2}$, and $T_{2,1}$]\label{example1} With $\sigma_i=\tau_i=\frac{i-1}{\ell-1}$ and $\kappa_{0,i}=\kappa_{i,0}=\kappa_{0,0}\equiv 1$, the recursion \eqref{eqn:recursion-on-tiles} gives on the bottom-left corner tile $T_{1,1}$ that
\begin{equation}\label{example1:eq1}
\kappa_{1,1}= \sum_{n=0}^\infty\bm{T}_{1,1}^n 1 + \bm{T}_{1,1}^m1 - \bm{T}_{1,1}^m1 = \sum_{n=0}^\infty\bm{T}^n_{1,1}1\quad\text{uniformly on } \ T_{1,1},
\end{equation}
where the operator $\bm{T}_{1,1} : f\longmapsto \left[(s,t)\mapsto\int_{0}^{t}\!\int_{0}^{s}\!\rho_{1,1}f(u,v)\,\mathrm{d}u\,\mathrm{d}v\right]$ is applied repeatedly. Since $(\bm{T}_{1,1}^n 1)(s,t)=\frac{(\rho_{1,1}st)^n}{(n!)^2}$ for each $n\in\N_0$ (as can be readily verified by induction), the recursion \eqref{example1:eq1} precisely recovers the expansion \eqref{lem:single-tile-solution:eq1}. On the adjacent tile $T_{1,2}$, the identity \eqref{eqn:recursion-on-tiles} yields
\begin{align*}
\kappa_{1,2}(s,t) &= \sum_{n=0}^\infty\left[\bm{T}_{1,2}^n\kappa_{0,2}^{(\sigma_1,t)} + \bm{T}_{1,2}^n\kappa_{1,1}^{(s,\tau_2)} - \bm{T}_{1,2}^n\kappa_{0,1}^{(\sigma_1,\tau_2)}\right]\!(s,t)\\
&= \sum_{n=0}^\infty\left[\bm{T}_{1,2}^n 1 + \sum_{i=0}^\infty\frac{\rho_{1,1}^i\tau_2^i}{(i!)^2}\bm{T}_{1,2}^n\tilde{s}^i - \bm{T}_{1,2}^n 1\right]\!\!(s,t) = \sum_{i,j=0}^\infty\frac{\rho_{1,1}^i\rho_{1,2}^j}{(i+j)!i!(\ell-1)^i}s^{i+j}(t-\tau_2)^j,
\end{align*}
where the last equality used the definition of $\tau_2$ and that, for each iteration index $n\in\N_0$,
\begin{equation}\label{eqn:tiled-operator-powers}
(\bm{T}_{1,2}^n\tilde{s}^i)(s,t) = \rho_{1,2}\int_{\tau_2}^t\!\int_{0}^s\!(\bm{T}^{n-1}\tilde{s}^i)(u,v)\,\mathrm{d}u\,\mathrm{d}v = \frac{\rho_{1,2}^n}{(i+1)^{\bar{n}}n!}s^{i+n}(t-\tau_2)^n
\end{equation}
with $(x)^{\bar{n}}\coloneqq\prod_{i=0}^{n-1}(x+i)$, as one verifies immediately (Lemma A.2). Analogous computations show
\begin{equation}
\kappa_{2,1}(s,t) = \sum_{k,l=0}^\infty\frac{\rho_{1,1}^l\rho_{2,1}^k}{(k+l)!l!(\ell-1)^l}(s - \sigma_2)^kt^{k+l}
\end{equation}
uniformly in $(s,t)\in T_{2,1}$. \hfill $\bm{\diamond}$
\end{example}

The observation \eqref{eqn:tiled-operator-powers} is recorded as Lemma A.2 for later use. Given the recursion \eqref{eqn:recursion-on-tiles}, we now need an explicit algorithm ($\phi$) to extract the coefficients in \eqref{eqn:series-coeffs_general-recursion} and thus build the approximations in \eqref{eqn:adomian}. The next section provides this (Propositions 
\ref{prop:coefficientrecursion} and \ref{prop:recursion-speedup-v2}).

\subsection{Computing the Neumann Series Coefficients}\label{sec:computing-neumann-coefficients}
To algorithmically extract the power-series coefficients from the tilewise Neumann recursions \eqref{eqn:recursion-on-tiles}, we encode the action of the propagation operators $\bm{T}_{k,l}$ on the monomial basis $\{s^i t^j\mid (i,j)\in\N_0^2\}\subset C(T_{k,l})$ via a simple Vandermonde scheme: Define the power map $\eta : [0,1]\rightarrow\ell_\infty(\N_0)$ by 
\begin{equation}
\eta(r) = (r^i\mid i\in\N_0) \equiv (1,r,r^2,r^3,\cdots),
\end{equation}
and, for $(c_{ij})_{i,j\geq 0}\in\ell_1(\N_0^2)$, define the doubly-infinite matrices $C\in \mathscr{L}(\ell_\infty(\N_0),\ell_1(\N_0))$ by
\begin{equation}\label{eqn:coefficient-matrix}
C \equiv \big(c_{i,j}\big)_{\!i,j\geq 0} \,:\, a\equiv(a_j)_{j\geq 0} \longmapsto \left({\textstyle\sum}_{j\geq 0}c_{ij}a_j\right)_{i\geq 0}\eqqcolon C\cdot a.
\end{equation}
By the Weierstrass $M$-test, each such $C$ then induces a continuous function
\begin{equation}\label{eqn:matrixrepdfunction}
C_{\langle\sigma,\tau\rangle} \,:\, [0,1]^2\ni (s,t) \longmapsto \big\langle \eta(s), C\eta(t)\big\rangle= \textstyle{\sum}_{i,j\geq0}c_{ij}s^it^j\in\R, 
\end{equation}
where $\langle\cdot,\cdot\rangle$ denotes the dual pairing between $\ell_\infty(\N_0)$ and $\ell_1(\N_0)$. The localised kernels \eqref{eqn:kappas} can then all be represented in the explicit form \eqref{eqn:matrixrepdfunction} for some recursively related coefficient matrices $C^{k,l}$:

\begin{proposition}\label{prop:coefficientrecursion}
For each $k,l\in\{1,\ldots, \ell-1\}$, there is $C^{k,l}\equiv\big(c_{k,l}^{i,j}\big)_{\!i,j\geq 0}\in\ell_1(\N_0^2)$ such that
\begin{equation}\label{prop:coefficientrecursion:eqn1}
\kappa_{k,l} = \left.C^{k,l}_{\langle\sigma,\tau\rangle}\right|_{T_{k,l}} \quad\text{and}\quad C^{k,l} = \sum_{n=0}^\infty C^{k,l}_{n}\quad\text{in } \ \ell_1(\N_0^2),
\end{equation}
with $\kappa_{k,l} = \lim_{m\rightarrow\infty}\!\big[\sum_{n=0}^mC^{k,l}_n\big]_{\!\langle\sigma,\tau\rangle}$ uniformly on $T_{k,l}$. The sequence $\big(C^{k,l}_n\big)_{n\geq 0}\subset\ell_1(\N_0^2)$ is 
\begin{align}\label{prop:coefficientrecursion:eqn2}
\text{recursively defined by}\quad C^{k,l}_{n+1} &\phantom{:}= \rho_{k,l}L_{\sigma_{k}} C^{k,l}_{n}R_{\tau_l} \quad(n\in\N_0)\\\label{prop:coefficientrecursion:eqn3}
\text{with initial value \phantom{.}}\quad
C^{k,l}_0 &\coloneqq \big(\alpha_i\delta_{i0} + \beta_j\delta_{0j} - \gamma\delta_{0i}\cdot\delta_{0j}\big)_{\!i,j\geq 0},
\end{align}
for $(\alpha_i)\coloneqq C^{k,l-1}\eta(\tau_l)$, $(\beta_i)\coloneqq\big(C^{k-1,l}\big)^{\!\dagger}\eta(\sigma_k)\in\ell_1(\N_0)$ and $\gamma\coloneqq\big\langle\eta(\sigma_k),C^{k-1,l}\eta(\tau_l)\big\rangle$ and the boundary coefficients $C^{0,\iota} = C^{\iota,0} \coloneqq (\delta_{0i}\cdot\delta_{0j})_{i,j\geq 0}$ for each $\iota\in\N_0$. Identities \eqref{prop:coefficientrecursion:eqn2} use the matrices
\begin{equation}
L_\sigma\coloneqq (I - H(\sigma))\underline{S} \quad\text{and}\quad R_\tau\coloneqq \underline{T}(I - G(\tau)),
\end{equation}
\mbox{$I\coloneqq(\delta_{ij})_{i,j\geq 0}$, $H(\sigma)\coloneqq(\sigma^j\delta_{i0})_{i,j\geq 0}$, $G(\tau)\coloneqq(\tau^i\delta_{0j})_{i,j\geq 0}$, $\underline{S}\coloneqq{\!\!\!\!\!\!\footnotemark}\,\,\,\big(\tfrac{\delta_{i-1,j}}{i}\big)_{i,j\geq 0}$, $\underline{T}\coloneqq\big(\tfrac{\delta_{i,j-1}}{j}\big)_{i,j\geq 0}$.}
\end{proposition}\footnotetext{Here and in the definition of $\underline{T}$ we adopt the convention $\tfrac{0}{0}\coloneqq 0$.} 

\noindent
Numerically, the dominant cost in applying \eqref{eqn:recursion-on-tiles} is the double integration $\int_{\tau_l}^\cdot\int_{\sigma_k}^\cdot$, i.e., the multiplications $L_{\sigma_k}\!(\cdot)R_{\tau_l}$ in \eqref{prop:coefficientrecursion:eqn2}. This cost depends on the expansion center $(s_k,t_l)$ in the representation
\begin{equation}\label{eqn:sigkernel:powerseries_centered}
\kappa_{k,l}(s,t) = \sum_{i,j=0}^\infty c_{k,l;(s_k,t_l)}^{i,j}(s-s_k)^i(t-t_l)^j \qquad \big((s,t)\in T_{k,l}\big).
\end{equation}
Proposition \ref{prop:coefficientrecursion} establishes \eqref{eqn:sigkernel:powerseries_centered} for $(s_k, t_l)=(0,0)$ and yields a direct implementation via \eqref{prop:coefficientrecursion:eqn1}--\eqref{prop:coefficientrecursion:eqn2}.  
The next result shows that centering instead at the tile corner $(s_k, t_l) = (\sigma_k, \tau_l)$ substantially reduces computation while preserving uniform convergence. In what follows, $\odot$ denotes the Hadamard product of doubly-infinite matrices (i.e., the entry-wise matrix multiplication $(a_{i,j})_{i,j\geq 0}\odot(b_{i,j})_{i,j\geq 0}\coloneqq (a_{i,j}b_{i,j})_{i,j\geq 0}$), and we abbreviate $\ell'\coloneqq\ell-1$.

\begin{proposition}\label{prop:recursion-speedup-v2}
For each $k,l\in\{1,\ldots,\ell'\}$, the localised solution \eqref{eqn:kappas} has the tile-centered expansion
\begin{equation}\label{prop:coefficientrecursion:eqn4}
\kappa_{k,l}(s,t) = \sum_{i,j=0}^\infty\tilde{c}^{(k,l)}_{i,j}(s-\sigma_k)^i(t-\tau_l)^j \qquad\text{for}\quad \tilde{C}^{k,l}\equiv\big(\tilde{c}^{(k,l)}_{i,j}\big)_{i,j\geq 0}\coloneqq A_{k,l}\odot B_{k,l}\odot W,\vspace{-0.5em}
\end{equation}
uniformly in $(s,t)\in T_{k,l}$, where $W\equiv(w_{i,j})_{i,j\geq 0}$ with $w_{i,j}\coloneqq\tfrac{\big(\max(i,j) - \min(i,j)\big)!}{\max(i,j)!\min(i,j)!}$, as well as $A_{k,l}\coloneqq\big(\rho_{k,l}^{\min(i,j)}\big)_{i,j\geq 0}$, and the matrix $B_{k,l}\equiv\big(b^{(k,l)}_{i,j}\big)_{i,j\geq 0}$ is defined by
\begin{equation}\label{prop:coefficientrecursion:eqn5.0}
b^{(k,l)}_{i,i+r}\coloneqq\alpha^{(k,l)}_r \qquad\text{and}\qquad b^{(k,l)}_{i,i-r}\coloneqq\beta^{(k,l)}_{|r|}, \qquad \text{for each } \ (i,r)\in\N_0^2\,,  
\end{equation}
with $\beta^{(1,1)}_r=\alpha^{(1,1)}_r\coloneqq\delta_{0,r}$ for each $r\in\N_0$, and recursively $($recalling \eqref{eqn:coefficient-matrix} for notation$)$,
\begin{equation}\label{prop:coefficientrecursion:eqn5}
\big(\alpha^{(k,l)}_r\big)_{r\geq 0} \coloneqq \tilde{C}^{k,(l-1)}\!\cdot\eta(1/\ell') \qquad\text{and}\qquad \big(\beta^{(k,l)}_r\big)_{r\geq 0} \coloneqq \big[\tilde{C}^{(k-1),l}\big]^{\dagger}\!\cdot\eta(1/\ell').
\end{equation}
\end{proposition}

Equations \eqref{prop:coefficientrecursion:eqn4}, \eqref{prop:coefficientrecursion:eqn5.0}, and \eqref{prop:coefficientrecursion:eqn5} define an efficient, tile-centered implementation of \eqref{eqn:series-coeffs_general-recursion}. Proposition \ref{prop:gram-approximation} provides rigorous a priori bounds for the induced Gram-matrix approximation error. The next section presents a numerical evaluation of this method---covering accuracy, memory usage, and runtime---and illustrates its applicability to downstream tasks on real data.

\section{Numerical Experiments}\label{sec:experiments}
We evaluate our method, \texttt{PowerSig}, in terms of accuracy, memory usage, and runtime. Specifically, we compute the self-signature kernel of randomly drawn two-dimensional Brownian motion paths on $[0,1]$ at increasing sampling frequencies, using sample lengths $\ell = 2^k + 1$ for $k \geq 0$, constrained only by GPU memory; comparisons are made against the state-of-the-art \texttt{KSig} library \cite{toth2025user}. We use \texttt{KSig} both with its truncated signature kernel and with its PDE-based solver at the default dyadic order. Experiments were run on an NVIDIA RTX 4090 GPU (24 GB). Unless noted, \texttt{PowerSig} truncation order for the tile-center local series \eqref{prop:coefficientrecursion:eqn4} is fixed at $7$, although higher orders are equally feasible. Accuracy is reported as Mean Absolute Percentage Error (MAPE) relative to the $\texttt{KSig}$ truncated signature kernel (order 1, truncation level 21), memory is peak GPU usage, and runtime is total execution time. Each point averages 10 independent runs.

\paragraph{Accuracy.}
Figure~\ref{fig:mape} compares the accuracy between \texttt{PowerSig} and the PDE-based solver from \texttt{KSig} across two-dimensional Brownian motion paths of length up to 513 (the maximum length manageable by the truncated signature kernel; left panel) and on two-dimensional fractional Brownian motion paths of fixed length 51 across decreasing Hurst indices (from $0.4$ down to $0.005$; right panel). \texttt{PowerSig}, despite employing only a modest truncation order, achieves superior accuracy and remarkably low error levels as length and irregularity (`roughness') of the input time series increase. \texttt{PowerSig}'s robust performance on time series with low Hurst indices further suggests significantly enhanced numerical stability of the method, particularly for highly irregular (rough) trajectories. 

\begin{figure}[h]
\centering
 \centering
    \includesvg[width=0.9\textwidth]{rough_and_accuracy_mape_comparison.svg} 
\caption{\emph{Comparison of Mean Absolute Percentage Error (MAPE) between \texttt{PowerSig} and the PDE-based solver from \texttt{KSig}. Left: for two-dimensional Brownian motion paths on $[0,1]$ across increasing path lengths $\ell$. Right: for two-dimensional fractional Brownian motions of fixed length $\ell=51$ across increasingly irregular sample paths (decreasing Hurst index, swept through progressively rougher regimes); the right panel reports MAPE relative to the signature kernel truncated at level 180.}}
\label{fig:mape}
\end{figure}

\paragraph{Memory Usage and Runtime.}
Figure~\ref{fig:memory_and_duration} highlights the practical advantages of \texttt{PowerSig} in terms of GPU memory usage and runtime. Specifically, its localized, tile-based computation drastically reduces memory overhead, enabling computations on paths of length $\ell = 524\,289$ with under 720\,MB GPU memory, which is orders of magnitude lower than both PDE- and dynamic-programming-based methods. The inherent sparsity of the propagated Neumann-series expansions enables efficient memory management, allowing \texttt{PowerSig} to handle substantially larger-scale problems and overcome the storage bottleneck associated with full-grid methods.

\begin{figure}[h]
    \centering
    \includesvg[width=0.9\textwidth]{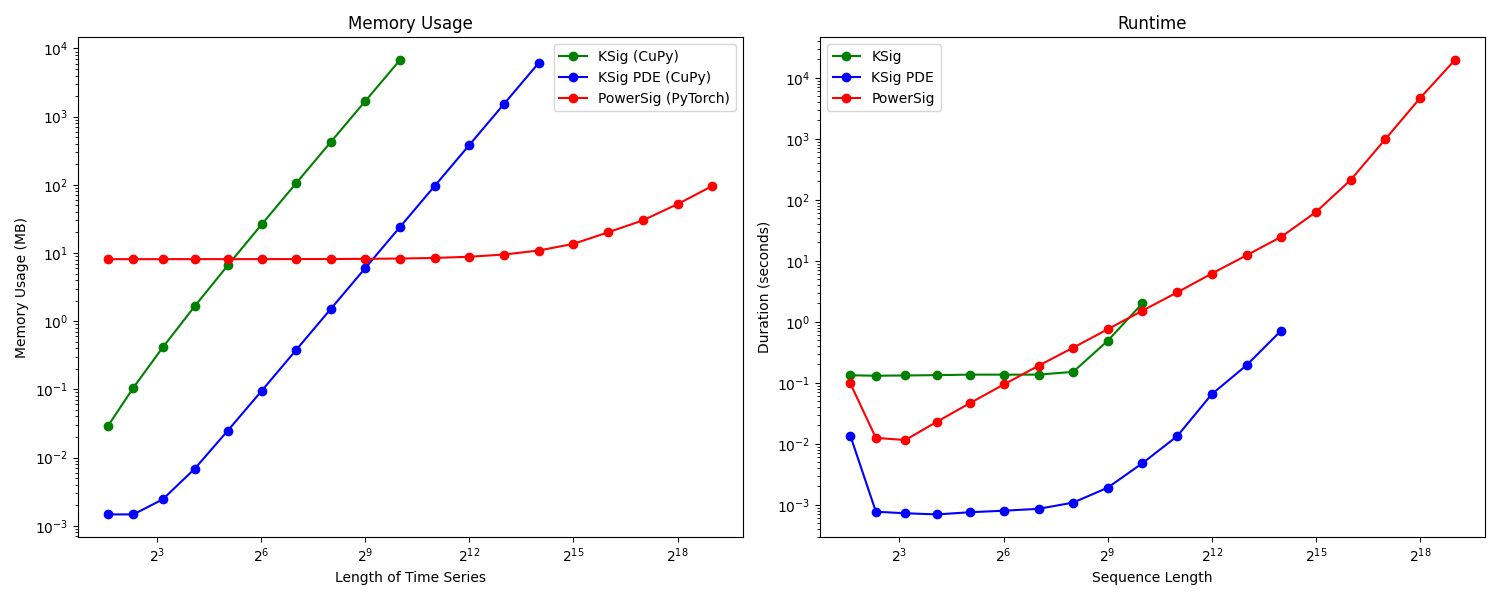} 
    \caption{%
       \emph{Peak GPU memory usage (left) and runtime (right) for computing the signature kernel on two-dimensional Brownian motion paths, comparing \texttt{PowerSig} with the truncated-signature (KSig) and PDE-based (KSig PDE) solvers. \texttt{PowerSig} achieves substantially lower memory consumption and maintains computational feasibility for large $\ell$, well beyond the limits of the alternative methods.}}
    \label{fig:memory_and_duration}
\end{figure}

\paragraph{Time Complexity}
\texttt{PowerSig} retains the $\mathcal{O}(\ell^2 d)$ runtime scaling of existing methods but significantly improves space complexity to $\mathcal{O}(\ell P)$, where $P$ (polynomial truncation order) is typically much smaller than $\ell$. By storing only on a single diagonal of coefficient blocks, \texttt{PowerSig} enables the processing of much longer paths than previously feasible with existing approaches.

\subsection*{Empirical Evaluation on Real and Large-Scale Settings}
Beyond the above benchmarks, we also assess downstream performance and compare \texttt{PowerSig} with recent low-rank and random Fourier Feature (RFF) approximations. Unless noted otherwise, we use the default truncation policy and report averages over multiple independent runs. All figures, implementation details, and full hyperparameter grids appear in the supplement (downstream figures have been moved to Section \ref{sect:downstream-figures} of the supplement to comply with page limits). 
\begin{enumerate}[label=(\Alph*)]
    \item \emph{Bitcoin price regression (Salvi et al.\ \cite{salvi2021signature}).} Figure \ref{fig:supp-bitcoin} shows train and test fits (two-day rolling average) for kernel-ridge regression (MAPE) on the public bitcoin pricing dataset featured in Salvi et al.\ in \cite{salvi2021signature}. On the test set, \texttt{PowerSig} attains $2.81\%$ MAPE versus $3.23\%$ for the (RBF-assisted) \texttt{KSig-PDE}. For the underlying Gram-matrix construction, peak memory for \texttt{KSig-PDE} scales as $\mathcal{O}(N^{2}\ell^{2})$ (with $N$ windows and window length $\ell$), whereas \texttt{PowerSig} uses only $\mathcal{O}(\ell^{2})$. For the present setup ($N=299$, $\ell=36$), this extrapolates to roughly $\sim 1.6$\,GB for \texttt{KSig-PDE} versus $\sim 0.038$\,MB for \texttt{PowerSig}. This illustrates that the near-exact regime enabled by \texttt{PowerSig}'s linear-in-length memory profile yields tangible predictive gains at far lower resource cost, with particularly clear benefits even at short window lengths. 
    \item \emph{UEA Eigenworms classification and RFF/low-rank baselines.} We benchmark \texttt{PowerSig} and \texttt{KSig-PDE} against linear/RBF kernel SVMs and the recent specialized RFF-based method \texttt{RFSF-TRP} from \cite{toth2023random} on the standard Eigenworms dataset with input window lengths $L\in\{16, 32, \ldots, 1024\}$. As shown in Figure~\ref{fig:supp-eigenworms}, \texttt{PowerSig} (and \texttt{KSig-PDE} up to $L=128$ before OOM) remains competitive and rises to $61.1\%$ accuracy at $L=1024$, whereas \texttt{RFSF-TRP} attains a slightly higher peak of $62.5\%$ at $L=128$ but exhausts memory for larger $L$, consistent with the storage advantages in Figure~\ref{fig:memory_and_duration}. These results show that substantially extending the input window—enabled here at scale by \texttt{PowerSig}—can narrow performance gaps often ascribed to inductive bias while maintaining feasibility.
    \item \emph{Long-horizon periodic signals (industrial/sensing proxy).}
    Motivated by predictive maintenance (near-periodic gearbox/turbine vibrations) and narrow-band I/Q radio signals, we generate synthetic near-periodic time series with adjustable period length. For representative instances (see, e.g., Figure \ref{fig:supp-recurrence_ts}), SVM-regression error decreases monotonically as input windows span multiple periods. As shown in Figure \ref{fig:supp-recurrence}, \texttt{PowerSig} sustains this behavior at window lengths beyond the reach of conventional or low-rank signature-kernel methods, while peak memory grows linearly with window length.
    \item \emph{High-dimensional scaling.} Complementing Figure \ref{fig:memory_and_duration}, we fix path length $\ell=4096$ and vary dimension $d$ from $2$ to $8192$. Figure \ref{fig:dimension} shows stable accuracy and near-perfect linear runtime from $d=64$ to $4096$ (slightly sublinear outside), with memory following our one-strip tiling profile. This corroborates the practicality of \texttt{PowerSig} in high-dimensional sensing and multivariate finance.
\end{enumerate}

Overall, across real regression and classification tasks and stress tests in length and dimension, \texttt{PowerSig} delivers competitive or superior accuracy, strong robustness to rough and long inputs, and demonstrated scalability, while using substantially less memory than alternative methods.

\section{Conclusion}
\label{sec:conclusion}

We introduced \texttt{PowerSig}, a method for computing signature kernels of piecewise-linear time series via localized Neumann–series expansions. Recasting the kernel-defining Goursat PDE as a Volterra equation yields uniformly convergent, tile-centered power-series expansions of the kernel that propagate only boundary data along a directed tile graph and admit efficient adaptive per-tile truncation (Lemma~\ref{lem:spectralradius}, Prop.~\ref{prop:recursion}, Prop.~\ref{prop:recursion-speedup-v2}, Prop.~\ref{prop:gram-approximation}). The resulting design achieves linear-in-length memory $\mathcal{O}(\ell P)$, preserves the standard $\mathcal{O}(\ell^2 d)$ runtime, and supports straightforward parallelism.

Empirically, \texttt{PowerSig} matches or exceeds state-of-the-art PDE- and DP-based solvers in accuracy, remains stable on highly irregular (low-Hurst) inputs, and scales to path lengths previously infeasible on commodity GPUs. On downstream tasks it delivers competitive or improved predictive performance at substantially lower memory cost. 

Future work includes tighter extraction of tile-boundary coefficients, adaptive scheduling across tiles to leverage hardware concurrency, and refined truncation policies guided by local roughness. Additional directions include extending beyond piecewise-linear interpolation (e.g., to higher-order segments or learned segment maps), integrating certified a posteriori error control, and broadening applications in finance, sensing, and long-horizon sequence modeling.

\newpage
\section*{Acknowledgements}The authors thank Csaba Tóth for helpful discussions on benchmarking signature kernel downstream tasks. The authors are also grateful to the program chair and three anonymous reviewers for their helpful and constructive comments and suggestions. A.S.\ acknowledges funding from the Bavarian State Ministry of Sciences and the Arts in the framework of the bidt Graduate Center for Postdocs.
\bibliography{example_paper}


\appendix

\section{Mathematical Proofs}\label{sect:proofs}
\subsection{Proof of Lemma \ref{lem:spectralradius}}\label{pf:lem:spectralradius}
\begin{proof}[Proof of Lemma \ref{lem:spectralradius}]
By definition of $r(\bm{T}_{\varrho})$, we need to show that the spectrum of $\bm{T}_\varrho$ is $\sigma(\bm{T}_{\varrho}) = \{0\},$ i.e., that zero is the only element in the spectrum of $\bm{T}_{\varrho}.$ This is equivalent to establishing that for all $\lambda \in \mathbb{C} \backslash \{0\},$
the operator $\lambda \, \mathrm{id} - \bm{T}_{\varrho}$ is a bijection with a bounded inverse. Here,  $\mathrm{id}$ denotes the identity on $C([0,1]^2)$. To show this, we note that it suffices to prove that 
$$
\lambda \, \mathrm{id} - \bm{T}_{\varrho}: C(I) \to C(I)
$$
is a bijection: since $C(I)$ is a Banach space, the inverse mapping theorem implies that in this case, the inverse is a bounded operator. Thus, it is left to show that 
\begin{equation}\label{eqn:FVolterra}
\left(\lambda \, \mathrm{id} - \bm{T}_{\varrho} \right) K = F
\end{equation}
is uniquely solvable in $C(I)$ for all $\lambda \in \mathbb{C} \backslash \{0\}$ and all $F \in C(I)$. 

One way to establish this result is via the Picard iteration, through proving that (see \cite{mckee2000euler}) the Picard iterates $K_n$ given recursively by 
$$
K_{n+1}(s,t)= F(s,t) + \int_0^s\!\!\int_0^t \!\varrho(u,v) K_n(u,v) \, \mathrm{d}u \, \mathrm{d}v, \quad K_0(s,t) \equiv 0,
$$
form a Cauchy sequence in $\left(C(I), \left\| \cdot \right\|_{\infty} \right)$:
\begin{equation}\label{eqn:Cauchy}
|K_{n+1}(s,t)-K_n(s,t)| \leq \left(\frac{1}{2}\right)^n \left\|F\right\|_{\infty} e^{\beta(s+t)}, \quad (s,t) \in I,
\end{equation}
where $\beta = \left(2 \left\|\varrho\right\|_{\infty}\right)^{1/2}$.

This guarantees the existence of the limit $K_*:=\lim_{n \to \infty} K_n$ as an element in $C(I).$ Since each $K_n$ is bounded by
\begin{align*}
\left| K_n (s,t)\right| = \left| K_n (s,t)-K_0(s,t)\right| &\leq \sum_{i=0}^{n-1}\left|K_{i+1}(s,t)-K_i(s,t)\right|,\\
&\leq \sum_{i=0}^{n-1} 2^{-i} \left\|F\right\|_{\infty} e^{\beta(s+t)} \leq 2 \left\|F\right\|_{\infty} e^{\beta(s+t)},
\end{align*}
which is an integrable function on $I$, it follows by dominated convergence that
\begin{align*}
K_*(s,t) &= \lim_{n \to \infty} K_{n+1}(s,t) = F(s,t)+ \int_0^s \!\!\int_0^t \lim_{n\to \infty}\varrho(u,v) K_n(u,v) \, \mathrm{d}u \, \mathrm{d}v, \\
&=F(s,t)+ \int_0^s\!\! \int_0^t\!\varrho(u,v) K_*(u,v) \, \mathrm{d}u \, \mathrm{d}v,
\end{align*}
so that $K_*$ solves the integral equation \eqref{eqn:FVolterra}. As noted in \cite{mckee2000euler}, uniqueness can be established with another proof by induction. We give a precise argument for the convenience of the reader.

For this, suppose that there exists another solution $K \in C(I)$ to (\ref{eqn:FVolterra}) different from $K_*$. Then,
\begin{equation}\label{eqn:uniqueness}
K_{n+1}(s,t)-K(s,t) = \int_0^s\!\!\int_0^t\! \varrho(u,v) \left(K_n(u,v)-K(u,v)\right) \, \mathrm{d}u \, \mathrm{d}v. 
\end{equation}
As suggested in \cite{mckee2000euler}, showing 
\begin{equation}
\left| K_n(s,t) - K(s,t) \right| \leq 2^{-n}  \left\| K \right\|_\infty e^{\beta(s+t)},
\end{equation}
is sufficient for establishing uniqueness of the solution of (\ref{eqn:FVolterra}). For the proof by induction, note that the case $n=0$ holds trivially since $K_0\equiv 0.$ The induction step $n \mapsto n+1$ is also straightforward: 
\begin{align*}
\left| K_{n+1}(s,t) - K(s,t)\right| &= \left| \int_0^s\!\!\int_0^t \!\varrho(u,v) \left(K_n(u,v)-K(u,v)\right) \, \mathrm{d}u \, \mathrm{d}v \right|,\\
& \leq  \left\| \varrho\right\|_{\infty} 2^{-n} \left\| K \right\|_\infty \int_0^s\!\! \int_0^t\! e^{\beta(u+v)} \, \mathrm{d}u \, \mathrm{d}v,\\
&\leq\left\| \varrho\right\|_{\infty} 2^{-n} \left\| K \right\|_\infty \beta^{-2} \left(e^{\beta s} -1 \right) \left(e^{\beta t}-1 \right),\\
& \leq 2^{-(n+1)} \left\| K \right\|_\infty e^{\beta(s+t)},
\end{align*}
where the first line is an application of \eqref{eqn:uniqueness}. Therefore, in $C(I)$, a solution to (\ref{eqn:FVolterra}) always exists and is unique. Together with employing the inverse mapping theorem, this concludes that the spectral radius of $\bm{T}_{\varrho}$ is zero.
\end{proof}

\subsection{Proof of Proposition \ref{prop:sigkernel-welldefined}}\label{pf:sigkernel-welldefined}
\begin{proof}
We can reformulate \eqref{eqn:GoursatPDE} as the equivalent integral equation \eqref{eqn:Volterra}, which in turn is equivalent to
\begin{equation}
(\mathrm{id} - \bm{T}_{\rho_{\bm{x},\bm{y}}})K = u_0
\end{equation}
for $\mathrm{id}$ the identity on $C([0,1]^2)$ and $u_0$ the constant one-function $u_0\equiv 1$ and the integral operator 
\begin{equation}
\bm{T}_{\rho_{\bm{x},\bm{y}}} : \big(C([0,1]^2),\|\cdot\|_{\infty}\big)\longrightarrow \big(C([0,1]^2),\|\cdot\|_{\infty}\big), \ \ f\longmapsto \left[(s,t)\mapsto\int_{0}^{t}\!\!\int_{0}^{s}\rho_{\bm{x},\bm{y}}(u,v)\,f(u,v)\,\mathrm{d}u\,\mathrm{d}v\right].
\end{equation}
The operator $\bm{T}_{\rho_{\bm{x},\bm{y}}}$ is clearly linear and bounded, and has spectral radius zero, $r(\bm{T}_{\rho_{\bm{x},\bm{y}}})=0$, by Lemma \ref{lem:spectralradius}. Consequently (cf.\ \cite[Theorem VI.6]{reed1981functional}), we have $\lim_{n \to \infty} \left\|\bm{T}_{\rho_{\bm{x},\bm{y}}}^n\right\|^{1/n} = r(\bm{T}_{\rho_{\bm{x},\bm{y}}}) = 0$, implying that
\begin{equation}\label{prop:sigkernel-welldefined:pf:aux1}
\forall\, q\in(0,1) : \,\exists\, n_q\in\N \,:\, \|\bm{T}_{\rho_{\bm{x},\bm{y}}}^n\| < q^n, \ \, \forall\, n\geq n_q.
\end{equation}
Consequently, the Neumann series $\sum_{n=0}^\infty \bm{T}_{\rho_{\bm{x},\bm{y}}}^n$ is $\|\cdot\|$-convergent and, hence, 
\begin{equation}
(\mathrm{id} - \bm{T}_{\rho_{\bm{x},\bm{y}}}) \ \text{ is invertible} \quad\text{with}\quad (\mathrm{id} - \bm{T}_{\rho_{\bm{x},\bm{y}}})^{-1} = \sum_{n=0}^\infty \bm{T}_{\rho_{\bm{x},\bm{y}}}^n.
\end{equation}
This, however, implies that $K$ is indeed the only solution of \eqref{eqn:Volterra}, additionally satisfying 
\begin{equation}
K = (\mathrm{id} - \bm{T}_{\rho_{\bm{x},\bm{y}}})^{-1}u_0 = \sum_{n=0}^\infty \bm{T}_{\rho_{\bm{x},\bm{y}}}^n u_0.
\end{equation}
\end{proof}

\begin{remark}
Note that the $n$-th Picard iterate is related to the Neumann series via
$$
K_n = \sum_{i=0}^n \bm{T}_{\rho_{\bm{x},\bm{y}}}^i K_1.
$$
We remark that this convergence result is independent of any bound on $\rho_{\bx,\by}$ and extends a classical result for Volterra equations in one dimension \cite[Theorem 4.1]{engl2013integralgleichungen}. In particular, $\bm{T}_{\rho_{\bm{x},\bm{y}}}$ need not be a contractive mapping. Repetition of the above argument similarly results in the converegence of $\sum_{n=0}^\infty k^{-n-1} \bm{T}_{\rho_{\bm{x},\bm{y}}}^n$, which corresponds to the fixed point iteration for solving $(k \, \mathrm{id} - \bm{T}_{\rho_{\bm{x},\bm{y}}}) K = f,$ for any $k \neq 0$ and any $f \in C(I).$ \hfill $\bm{\diamond}$
\end{remark}

\subsection{Proof of Lemma \ref{lem:single-tile-solution}}
\begin{proof}
The (bivariate) power series $k_{1,1} : (s,t)\mapsto \sum_{i=0}^\infty \frac{\rho_{1,1}^i}{(i!)^2}s^it^i$ converges uniformly absolutely on $[0,1]^2\supset T_{1,1}$, since $\sum_{i=0}^\infty \big|\frac{\rho_{1,1}^i}{(i!)^2}s^it^i\big|\leq e^{|\rho_{1,1}|}$ for all $(s,t)\in [0,1]^2$. In particular, $k_{1,1}$ is partially differentiable with mixed derivatives $(\partial_s\partial_t k_{1,1})(s,t) = \rho_{1,1}\sum_{i=1}^\infty\frac{\rho_{1,1}^{i-1} i^2}{(i!)^2}(st)^{i-1} = \rho_{\bx,\by}(s,t)k_{1,1}(s,t)$, for all interior points $(s,t)$ of $T_{1,1}$. Thus, $k_{1,1}$ solves the boundary value problem \eqref{eqn:GoursatPDE} on the tile $\mathcal{D}_{1,1}$, as does $K$. By the uniqueness of solutions to \eqref{eqn:GoursatPDE}, we conclude that $\left.k_{1,1}\right|_{T_{1,1}} = \left.K\right|_{T_{1,1}}$, which establishes \eqref{lem:single-tile-solution:eq1}.
\end{proof}

\subsection{Proof of Proposition \ref{prop:recursion}}
\begin{proof}
Let us note first that any point $(s,t)\in[0,1]^2$ can be decomposed as
\begin{equation}
(s,t) = (\tilde{s}, \tilde{t}) + (\sigma_{k(s)}, \tau_{l(t)})
\end{equation}
with $(\tilde{s},\tilde{t})\in\hat{T}_{1,1}$ and position indices $(k(s), l(t))\coloneqq (\lfloor s(\ell-1)\rfloor + 1, \lfloor t(\ell-1)\rfloor + 1)\in\{1,\ldots,\ell\}^2$
determined by the location of $(s,t)$ within the tiling $(\hat{T}_{k,l})$. Hence, and by the Volterra identity \eqref{eqn:Volterra},
\begin{equation}\label{eqn:integral-decomposition}
K(s,t) = K(\sigma_{k(s)}, t) + K(s, \tau_{l(t)}) - K(\sigma_{k(s)}, \tau_{l(t)}) + \int_{\tau_{l(t)}}^{t}\!\int_{\sigma_{k(s)}}^{s}\!\!\rho_{k(s),l(t)}K(u,v)\,\mathrm{d}u\,\mathrm{d}v
\end{equation}
for all $(s,t)\in[0,1]^2$. Defining the `boundary maps' $\gamma_{k,l} : T_{k,l}\rightarrow\R$ ($k,l=1,\ldots,\ell-1$) by 
\begin{align}\label{eqn:boundary-map}
\gamma_{k,l}(s,t)= K(\sigma_{k}, t) + K(s, \tau_{l}) - K(\sigma_{k}, \tau_{l}), \quad (s,t)\in T_{k,l},
\end{align}
the identity \eqref{eqn:integral-decomposition} can be expressed as 
\begin{equation}\label{eqn:integral-decomposition-2}
K(s,t) = \gamma_{k(s),l(t)}(s,t) + (\bm{T}_{k(s),l(t)}\kappa_{k(s),l(t)})(s,t), \quad (s,t)\in[0,1]^2.
\end{equation}
Since for each $(s,t)\in\hat{T}_{k,l}$, it holds that $(k(s),l(t)) = (k,l)$, equation \eqref{eqn:integral-decomposition-2} further is equivalent to the following $(k,l)$-indexed system of identities in $C(T_{k,l})$,
\begin{equation}\label{eqn:integral-decomposition-3}
(\mathrm{id} - \bm{T}_{k,l})\kappa_{k,l} = \gamma_{k,l} \quad(k,l=1,\ldots,\ell-1).
\end{equation}
From Lemma \ref{lem:spectralradius} and basic operator theory (cf.\ \citep[Thm.\ 2.9]{sasane2017friendly}), we know the identities \eqref{eqn:integral-decomposition-3} are invertible for $\kappa_{k,l}$ and the respective inverse operators can be written as a Neumann series in $T_{k,l}$,
\begin{equation}\label{eqn:kappa-Neumann}
\kappa_{k,l} = (\mathrm{id} - \bm{T}_{k,l})^{-1}\gamma_{k,l} = \sum_{n=0}^\infty\bm{T}_{k,l}^n\gamma_{k,l},
\end{equation}
where the above series converges wrt.\ $\|\cdot\|_{\infty;T_{k,l}}$, the sup-norm on $C(T_{k,l})$.\\[-0.5em] 

\noindent
For the recursive structure of the $\kappa_{k,l}$-identities \eqref{eqn:kappa-Neumann}, note that, for any fixed $(s,t)\in T_{k,l}$, we have $(\sigma_k,t)\in T_{k-1,l}\cap T_{k,l}$ and $(s,\tau_l)\in T_{k,l}\cap T_{k,l-1}$, where by definition $T_{0,l}=\{0\}\times[\tau_l,\tau_{l+1}]$ and $T_{k,0}=[\sigma_l,\sigma_{l+1}]\times\{0\}$ and $T_{0,0}\coloneqq\{(0,0)\}$. Consequently, the boundary map \eqref{eqn:boundary-map} evaluates to
\begin{equation}\label{eqn:gamma-recursive}
\gamma_{k,l}(s,t) = \kappa_{k-1,l}(\sigma_k,t) + \kappa_{k,l-1}(s,\tau_l) - \kappa_{k-1,l-1}(\sigma_k,\tau_l), \quad(s,t)\in T_{k,l},
\end{equation}
with $\kappa_{0,l}\coloneqq\left.K\right|_{T_{0,l}}$, $\kappa_{k,0}\coloneqq\left.K\right|_{T_{k,0}}$, $\kappa_{0,0}\equiv 1$. Combining \eqref{eqn:gamma-recursive} and \eqref{eqn:kappa-Neumann} proves \eqref{eqn:recursion-on-tiles}. 
\end{proof}

\begin{lemma}\label{lem:tiled-operator-powers}
Let $\varphi_\sigma^{(l)} : [0,1]^2\ni (s,t)\mapsto (s-\sigma)^l$ and $\varphi_\tau^{(l)} : [0,1]^2\ni (s,t)\mapsto (t-\tau)^l$, for any given $l\in\N_0$. Then for each $n\in\N_0$ and all $\mu,\nu\in\{1,\ldots,\ell-1\}$, we have that
\begin{equation}
\begin{aligned}
\bm{T}_{\mu,\nu}^n\big(\varphi_{\sigma_\mu}^{(l)}\big|_{T_{\mu,\nu}}\big)(s,t) &= \frac{\rho_{\mu,\nu}^n}{(l+1)^{\bar{n}}n!}(s-\sigma_\mu)^{l+n}(t-\tau_\nu)^n, \quad(s,t)\in T_{\mu,\nu}, \quad\text{and}\\ \bm{T}_{\mu,\nu}^n\big(\varphi_{\tau_\nu}^{(l)}\big|_{T_{\mu,\nu}}\big)(s,t) &= \frac{\rho_{\mu,\nu}^n}{(l+1)^{\bar{n}}n!}(s-\sigma_\mu)^{n}(t-\tau_\nu)^{l+n}, \quad(s,t)\in T_{\mu,\nu}.
\end{aligned}
\end{equation} 
\end{lemma}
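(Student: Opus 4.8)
The plan is a direct induction on the iteration index $k$, carried out for the two displayed identities simultaneously, since they are mirror images of one another under interchanging the two coordinates. First I would record the well-posedness point: $\varphi_{\sigma_\mu}^{(l)}\big|_{T_{\mu,\nu}}$ and $\varphi_{\tau_\nu}^{(l)}\big|_{T_{\mu,\nu}}$ are continuous (indeed polynomial) on the compact rectangle $T_{\mu,\nu}$, so by Lemma \ref{lem:spectralradius} all iterates $\bm{T}_{\mu,\nu}^k$ applied to them are well-defined elements of $C(T_{\mu,\nu})$, and the iterated integral in \eqref{eqn:integral-decomposition-operator} is unambiguous (Fubini) because at every stage the integrand is a polynomial.

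For the base case $k=0$, the conventions $(x)^{\bar 0}=1$ (empty product), $0!=1$, and $\rho_{\mu,\nu}^0=1$ make the claimed right-hand side of the first identity equal to $(s-\sigma_\mu)^l$, which is exactly $\varphi_{\sigma_\mu}^{(l)}\big|_{T_{\mu,\nu}}(s,t)$; the second identity is the same. For the inductive step, assuming the first identity at level $k-1\geq 0$, I would apply $\bm{T}_{\mu,\nu}$ once more using \eqref{eqn:integral-decomposition-operator}:
\begin{align*}
\bm{T}_{\mu,\nu}^k\big(\varphi_{\sigma_\mu}^{(l)}\big|_{T_{\mu,\nu}}\big)(s,t)
&= \rho_{\mu,\nu}\int_{\tau_\nu}^{t}\!\!\int_{\sigma_\mu}^{s}\frac{\rho_{\mu,\nu}^{k-1}}{(l+1)^{\overline{k-1}}(k-1)!}(u-\sigma_\mu)^{l+k-1}(v-\tau_\nu)^{k-1}\,\mathrm{d}u\,\mathrm{d}v\\
&= \frac{\rho_{\mu,\nu}^{k}}{(l+1)^{\overline{k-1}}(k-1)!}\cdot\frac{(s-\sigma_\mu)^{l+k}}{l+k}\cdot\frac{(t-\tau_\nu)^{k}}{k},
\end{align*}
using $\int_{\sigma_\mu}^{s}(u-\sigma_\mu)^{l+k-1}\mathrm{d}u=(s-\sigma_\mu)^{l+k}/(l+k)$ and $\int_{\tau_\nu}^{t}(v-\tau_\nu)^{k-1}\mathrm{d}v=(t-\tau_\nu)^{k}/k$.

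The only thing requiring care is the bookkeeping of the constants, which is where I would be most explicit: since $(l+1)^{\overline{k-1}}(l+k)=\prod_{i=0}^{k-2}(l+1+i)\cdot(l+k)=\prod_{i=0}^{k-1}(l+1+i)=(l+1)^{\bar{k}}$ and $(k-1)!\,k=k!$, the prefactor collapses to $\rho_{\mu,\nu}^k/\big((l+1)^{\bar{k}}k!\big)$, giving exactly the asserted closed form. The second identity follows from the identical computation after the swap $s\leftrightarrow t$, $\sigma_\mu\leftrightarrow\tau_\nu$ (to which $\bm{T}_{\mu,\nu}$ and its limits of integration are symmetric), now integrating $(u-\sigma_\mu)^{k-1}$ in $u$ and $(v-\tau_\nu)^{l+k-1}$ in $v$. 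I do not anticipate any genuine obstacle: the lemma is a routine induction, and its only substantive ingredient is the Pochhammer-type collapse $(l+1)^{\overline{k-1}}(l+k)=(l+1)^{\bar{k}}$ that closes the recursion in the stated form.
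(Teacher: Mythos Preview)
Your proof is correct and follows essentially the same approach as the paper: a direct induction on $k$, with the inductive step carried out by applying $\bm{T}_{\mu,\nu}$ once more to the induction hypothesis and evaluating the two elementary integrals $\int_{\sigma_\mu}^{s}(u-\sigma_\mu)^{l+k-1}\,\mathrm{d}u$ and $\int_{\tau_\nu}^{t}(v-\tau_\nu)^{k-1}\,\mathrm{d}v$. Your version is slightly more explicit about the base case conventions and the Pochhammer collapse $(l+1)^{\overline{k-1}}(l+k)=(l+1)^{\bar{k}}$, but the argument is the same.
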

\begin{proof}[Proof of Lemma \ref{lem:tiled-operator-powers}]
This follows immediately by induction. Indeed: the case $n=0$ is clear, and for $n\in\N$ we get
\begin{equation}
\begin{aligned}
\bm{T}_{\mu,\nu}^n\big(\varphi_{\sigma_\mu}^{(l)}\big|_{T_{\mu,\nu}}\big)(s,t) &= \rho_{\mu,\nu}\!\int_{\tau_\nu}^t\!\int_{\sigma_\mu}^s\!\bm{T}_{\mu,\nu}^{n-1}\!\big(\varphi_{\sigma_\mu}^{(l)}\big|_{T_{\mu,\nu}}\big)\!(u,v)\,\mathrm{d}u\,\mathrm{d}v\\ 
&= \rho_{\mu,\nu}\!\int_{\tau_\mu}^t\!\int_{\sigma_\nu}^s\!\frac{\rho_{\mu,\nu}^{n-1}}{(l+1)^{\overline{n-1}}(n-1)!}(u-\sigma_\mu)^{l+n-1}(v-\tau_\nu)^{n-1}\,\mathrm{d}u\,\mathrm{d}v \\
&= \frac{\rho_{\mu,\nu}^{n}}{(l+1)^{\overline{n-1}}(n-1)!}\!\int_{\tau_\nu}^t\!(v-\tau_\nu)^{n-1}\,\mathrm{d}v\!\int_{\sigma_\mu}^s\!(u-\sigma_\mu)^{l+n-1}\,\mathrm{d}u \\
&= \frac{\rho_{\mu,\nu}^{n}}{(l+1)^{\overline{n-1}}(n-1)!}\!\frac{(t-\tau_\nu)^n}{n}\frac{(s-\sigma_\mu)^{l+n}}{l+n} \quad\text{for each $(s,t)\in T_{\mu,\nu}$},
\end{aligned}
\end{equation}
as claimed. The proof for $\bm{T}_{\mu,\nu}^n\big(\varphi_{\tau_\nu}^{(l)}\big|_{T_{k,l}}\big)$ is entirely analogous. 
\end{proof}

\subsection{Proof of Proposition \ref{prop:coefficientrecursion}}\label{pf:prop:coefficientrecursion}
\begin{proof}
We proceed by induction on the tile position $(k,l)$. For this, note first that, for all $\sigma, \tau\geq 0$,
\begin{align}
L_\sigma &= \big(\delta_{ij} - \sigma^j\delta_{i0}\big)_{i,j\geq 0}\cdot \Big(\tfrac{\delta_{i-1,j}}{i}\Big)_{\!i,j\geq 0} = \left({\textstyle\sum}_{k=0}^\infty(\tfrac{\delta_{ik} - \sigma^k\delta_{i0}}{k})\delta_{k-1,j}\right)_{i,j\geq 0}\\ 
&= \big(\tfrac{\delta_{i,j+1} - \sigma^{j+1}\delta_{i0}}{j+1}\big)_{i,j\geq 0}\eqqcolon(\ell_{ij}(\sigma))_{i,j\geq 0}, \quad\text{and} \label{prop:coefficientrecursion:aux1}\\
R_\tau &= \Big(\tfrac{\delta_{i,j-1}}{j}\Big)_{i,j\geq 0}\cdot (\delta_{ij} - \tau^i\delta_{0j})_{i,j\geq 0} = \big({\textstyle\sum}_{k=0}^\infty(\tfrac{\delta_{kj} - \tau^k\delta_{0j}}{k})\delta_{i,k-1}\big)_{i,j\geq 0}\\
&= \big(\tfrac{\delta_{i,j-1} - \tau^{i+1}\delta_{0j}}{i+1}\big)_{i,j\geq 0} \eqqcolon (r_{ij}(\tau))_{i,j\geq 0}.\label{prop:coefficientrecursion:aux2}
\end{align}
Then for the base case $(k,l)=(1,1)$, we have that ($C^{0,l}_{\langle\sigma,\tau\rangle} = C^{k,0}_{\langle\sigma,\tau\rangle}= C^{0,0}_{\langle\sigma,\tau\rangle}\equiv 1$ and hence) $C^{1,1}_0 = (\delta_{0i}\cdot\delta_{0j})_{i,j\geq 0}$ and further, by \eqref{prop:coefficientrecursion:eqn2}, that
\begin{equation}
\begin{aligned}
L_0C^{1,1}_0R_0 &= (\ell_{ij}(0))_{i,j\geq 0}\cdot \big({\textstyle\sum_{n=0}^\infty}(\delta_{0i}\cdot\delta_{0n})r_{nj}(0)\big)_{i,j\geq 0} = (\ell_{ij}(0))_{i,j\geq 0}\cdot \big(\delta_{0i}r_{0j}(0)\big)_{i,j\geq 0}\\
&= \big({\textstyle\sum_{n=0}^\infty}\ell_{in}(0)r_{0j}(0)\cdot\delta_{0n}\big)_{i,j\geq 0} = \big(\ell_{i0}(0)r_{0j}(0)\big)_{i,j\geq 0} = \big(\delta_{i,1}\cdot\delta_{1,j}\big)_{i,j\geq 0}.
\end{aligned}
\end{equation}
Hence $\big[C^{1,1}_1\big]_{\langle\sigma,\tau\rangle} \stackrel{\eqref{prop:coefficientrecursion:eqn2}}{=} \big[\rho_{1,1}L_0C^{1,1}_0R_0\big]_{\langle\sigma,\tau\rangle} = \rho_{1,1}st$, whence $\left.\eqref{prop:coefficientrecursion:eqn1}\right|_{(k,l)=(1,1)}$ holds by Lemma \ref{lem:single-tile-solution} (and via induction on $n$) if, for any fixed $n\in\N_{\geq 2}$, 
\begin{equation}\label{prop:coefficientrecursion:aux4}
\Big(\tfrac{\rho_{1,1}^n}{(n!)^2}\delta_{in}\delta_{nj}\Big)_{\!i,j\geq 0} = \rho_{1,1}L_0\Big(\tfrac{\rho_{1,1}^{n-1}}{((n-1)!)^2}\delta_{i,n-1}\delta_{n-1,j}\Big)_{\!i,j\geq 0}R_0.
\end{equation}
Denoting $\alpha_{ij}\coloneqq\tfrac{\rho_{1,1}^{n-1}}{((n-1)!)^2}\delta_{i,n-1}\delta_{n-1,j}$ for each $i,j\geq 0$, the right-hand side of \eqref{prop:coefficientrecursion:aux4} reads
\begin{equation}\label{prop:coefficientrecursion:aux5}
\rho_{1,1}L_0\cdot\big({\textstyle\sum_{\mu=0}^\infty}\alpha_{i\mu}r_{\mu j}(0)\big)_{i,j\geq 0} = \rho_{1,1}\big({\textstyle\sum_{\nu,\mu=0}^\infty}\ell_{i\nu}(0)\alpha_{\nu\mu}r_{\mu j}(0)\big)_{i,j\geq 0}.
\end{equation}
Now by \eqref{prop:coefficientrecursion:aux1} and \eqref{prop:coefficientrecursion:aux2}, we obtain for any fixed $i,j,\nu,\mu\geq 0$ that
\begin{equation}\label{prop:coefficientrecursion:aux6}
\ell_{i\nu}(0)\alpha_{\nu\mu}r_{\mu j}(0) = \alpha_{\nu\mu}\cdot\tfrac{\delta_{i,\nu+1}}{\nu+1}\tfrac{\delta_{\mu,j-1}}{\mu+1} = \tfrac{\alpha_{i-1,j-1}}{ij}\cdot\delta_{i-1,\nu}\delta_{\mu,j-1}.
\end{equation}
This allows us to evaluate \eqref{prop:coefficientrecursion:aux5} to 
\begin{equation}\label{prop:coefficientrecursion:aux7}
\rho_{1,1}\big({\textstyle\sum_{\nu,\mu=0}^\infty}\ell_{i\nu}(0)\alpha_{\nu\mu}r_{\mu j}(0)\big)_{i,j\geq 0} = \rho_{1,1}\big(\alpha_{i-1,j-1}\big)_{i,j\geq 0} = \rho_{1,1}\Big(\!\tfrac{\rho_{1,1}^{n-1}}{((n-1)!)^2ij}\delta_{in}\delta_{nj}\Big)_{\!i,j\geq 0}, 
\end{equation}
proving \eqref{prop:coefficientrecursion:aux4} as desired. 

With the base case $(k,l)=(1,1)$ thus established, take now any $(k,l)\in\{1,\ldots,\ell-1\}^2$ with $k+l > 2$, and suppose that \eqref{prop:coefficientrecursion:eqn1} holds for $(k-1,l)$ and $(k,l-1)$ and $(k-1,l-1)$ (induction hypothesis). Then $\kappa_{\tilde{k}, \tilde{l}} = \left.C^{\tilde{k},\tilde{l}}_{\langle\sigma,\tau\rangle}\right|_{T_{\tilde{k},\tilde{l}}}$ for each $(\tilde{k},\tilde{l})\in\{(k-1,l), (k,l-1), (k-1,l-1)\}$, whence and by Proposition \ref{prop:recursion} we have that  
\begin{equation}\label{prop:coefficientrecursion:aux8}
\kappa_{k,l} = \sum_{n=0}^\infty\bm{T}^n_{k,l}\big(\big\langle\eta(\sigma_k),C^{k-1,l}\eta(\cdot)\big\rangle + \big\langle\eta(\cdot),C^{k,l-1}\eta(\tau_l)\big\rangle + \big\langle\eta(\sigma_k),C^{k-1,l-1}\eta(\tau_l)\big\rangle\big)
\end{equation}
uniformly on $T_{k,l}$. Since $\bm{T}^0_{k,l}=\left.\mathrm{id}\right|_{C(T_{k,l})}$, the $0^{\mathrm{th}}$ summand in \eqref{prop:coefficientrecursion:aux8} reads 
\begin{equation}\label{prop:coefficientrecursion:aux9}
\langle\eta(\sigma_k),C^{k-1,l}\eta(\cdot)\big\rangle + \big\langle\eta(\cdot),C^{k,l-1}\eta(\tau_l)\big\rangle + \big\langle\eta(\sigma_k),C^{k-1,l-1}\eta(\tau_l)\big\rangle = \left[C^{k,l}_0\right]_{\langle\sigma,\tau\rangle} 
\end{equation}
for the initial matrix $C^{k,l}_0$ from \eqref{prop:coefficientrecursion:eqn3}. Consequently, the claim \eqref{prop:coefficientrecursion:eqn1} follows if, for any fixed $n\in\N$, 
\begin{equation}\label{prop:coefficientrecursion:aux10}
\left.\left[C^{k,l}_{n+1}\right]_{\langle\sigma,\tau\rangle}\right|_{T_{k,l}} \!\!\!\!= \bm{T}_{k,l}\!\left(\left.\left[C^{k,l}_{n}\right]_{\langle\sigma,\tau\rangle}\right|_{T_{k,l}}\right).
\end{equation}
Abbreviating $u_{n+1}\coloneqq\bm{T}_{k,l}\!\left(\left.\left[C^{k,l}_{n}\right]_{\langle\sigma,\tau\rangle}\right|_{T_{k,l}}\right)$ and $C^{k,l}_{n}\eqqcolon(c_{ij})_{i,j\geq 0}$, note that by definition, 
\begin{equation}\label{prop:coefficientrecursion:aux11}
u_{n+1}(s,t) = \rho_{k,l}\!\sum_{i,j\geq 0}c_{ij}\!\int_{\tau_l}^t\!\int_{\sigma_k}^s\!\tilde{s}^i\tilde{t}^j\,\mathrm{d}\tilde{s}\,\mathrm{d}\tilde{t} = \rho_{k,l}\!\sum_{i,j\geq 0}\frac{c_{ij}}{(i+1)(j+1)}(s^{i+1} - \sigma_k^{i+1})(t^{j+1} - \tau_l^{j+1})
\end{equation}
for each $(s,t)\in T_{k,l}$ (see \eqref{eqn:integral-decomposition-operator} and \eqref{eqn:matrixrepdfunction}). Abbreviating $(\hat{c}_{ij})\coloneqq\Big(\tfrac{c_{ij}}{(i+1)(j+1)}\Big)_{\!i,j\geq 0}$ and using that
\begin{equation}\label{prop:coefficientrecursion:aux12}
(s^{i+1} - \sigma_k^{i+1})(t^{j+1} - \tau_l^{j+1}) = s^{i+1}t^{j+1} - \sigma_k^{i+1}t^{j+1} - \tau_l^{j+1}s^{i+1} + \sigma_k^{i+1}\tau_l^{j+1}
\end{equation}
for all $i,j\geq 0$, we can immediately rewrite \eqref{prop:coefficientrecursion:aux11} as $u_{k+1} = \left.\tilde{C}_{\langle\sigma,\tau\rangle}\right|_{T_{k,l}}$ for the coefficient matrix
\begin{equation}\label{prop:coefficientrecursion:aux13}
\tilde{C} \coloneqq \rho_{k,l}\big(\hat{c}_{i-1,j-1} - \hat{\gamma}_{i-1|:}\cdot\delta_{0j} - \hat{\gamma}_{:|j-1}\cdot\delta_{i0} + \hat{\gamma}_{:|:}\cdot\delta_{i0}\delta_{0j}\big)_{i,j\geq 0},
\end{equation}
where $\hat{\gamma}_{i|:}\coloneqq\sum_{j=0}^\infty\hat{c}_{ij}\tau_l^{j+1}$ and $\hat{\gamma}_{:|j}\coloneqq\sum_{i=0}^\infty\hat{c}_{ij}\sigma_k^{i+1}$ $(i,j\geq 1)$ and $\hat{\gamma}_{-1|:} = \hat{\gamma}_{:|-1} = \hat{c}_{-1,j}=\hat{c}_{i,-1}\coloneqq 0$ $(i,j\geq 0)$, and $\hat{\gamma}_{:|:}\coloneqq{\textstyle\sum}_{i,j\geq 0}\sigma^{i+1}_k\tau^{j+1}_l$. Hence, the desired identity \eqref{prop:coefficientrecursion:aux10} follows if
\begin{equation}\label{prop:coefficientrecursion:aux14}
\tilde{C} = \rho_{k,l}L_{\sigma_k} C^{k,l}_{n}R_{\tau_l}. 
\end{equation}
For a proof of \eqref{prop:coefficientrecursion:aux14}, note simply that, by definition of $L_\sigma$ and $R_\tau$ (cf.\ \eqref{prop:coefficientrecursion:aux1} and \eqref{prop:coefficientrecursion:aux2}),
\begin{equation}
\begin{aligned}
L_{\sigma_k} C^{k,l}_{n}R_{\tau_l} &= \left({\textstyle\sum_{\mu,\nu\geq 0}}\ell_{i\mu}(\sigma_k)c_{\mu\nu}r_{\nu j}(\tau_l)\right)_{\!i,j\geq 0}\\
&\hspace{-5em}= \left({\textstyle\sum_{\mu,\nu\geq 0}}(\delta_{i,\mu+1} - \sigma_k^{\mu+1}\delta_{i0})\tfrac{c_{\mu\nu}}{(\mu+1)(\nu+1)}(\delta_{\nu,j-1}- \tau_l^{\nu+1}\delta_{0j})\right)_{\!i,j\geq 0}\\
&\hspace{-5em}= \left(\hat{c}_{i-1,j-1} 
- {\textstyle\sum_{\nu=0}^\infty}\hat{c}_{i-1,\nu}\tau_l^{\nu+1}\delta_{0j}
- {\textstyle\sum_{\mu=0}^\infty}\hat{c}_{\mu,j-1}\sigma_k^{\mu+1}\cdot\delta_{i0}  + {\textstyle\sum_{\mu,\nu\geq 0}}\sigma_k^{\mu+1}\tau_l^{\nu+1}\cdot\delta_{i0}\delta_{0j}\right)_{\!i,j\geq 0}\\
&\hspace{-5em}=\big(\hat{c}_{i-1,j-1} - \hat{\gamma}_{i-1|:}\cdot\delta_{0j} - \hat{\gamma}_{:|j-1}\cdot\delta_{i0} + \hat{\gamma}_{:|:}\cdot\delta_{i0}\delta_{0j}\big)_{i,j\geq 0}.
\end{aligned}
\end{equation}
This implies \eqref{prop:coefficientrecursion:aux14} and hence concludes the overall proof of the proposition. 
\end{proof}

\subsection{Proof of Proposition \ref{prop:recursion-speedup-v2}}\label{pf:prop:recursion-speedup-v2}
\begin{proof}
We combine Lemma \ref{lem:tiled-operator-powers} with a similar reasoning as for Proposition \ref{prop:coefficientrecursion}.\newline
Proceeding by induction on $\eta\coloneqq k+l$, we will show that, uniformly in $(s,t)\in T_{k,l}$, 
\begin{equation}\label{prop:recursion-speedup:eq1}
\kappa_{k,l}(s,\tau_l) = \sum_{r=0}^\infty \alpha_r^{(k,l)}(s - \sigma_{k})^r \quad\text{and}\quad \kappa_{k,l}(\sigma_k,t) = \sum_{r=0}^\infty \beta_r^{(k,l)}(t - \tau_{l})^r
\end{equation}
for each $k,l\in\{1,\ldots,\ell-1\}$ (which, in particular, implies that $\alpha_0^{(k,l)}(=\kappa_{k,l}(\sigma_k,\tau_l))=\beta_0^{(k,l)}$), and use this to then prove \eqref{prop:coefficientrecursion:eqn4}. Starting with the base case $\eta=2$, note that $\left.\eqref{prop:recursion-speedup:eq1}\right|_{(k,l)=(1,1)}$ is immediate from the boundary conditions in \eqref{eqn:GoursatPDE} and the definition of $\alpha_0^{(1,1)}$ and $\beta_0^{(1,1)}$. Thus, for $(k,l)=(1,1)$,
\begin{equation}\label{prop:recursion-speedup:aux1}
\kappa_{k,l} = \sum_{n=0}^\infty\!\left[\sum_{i=0}^\infty\tilde{\alpha}_i^{(k,l)}\bm{T}^n_{k,l}(\varphi_{\sigma_k}^{(i)}) + \sum_{j=0}^\infty\beta_j^{(k,l)}\bm{T}^n_{k,l}(\varphi_{\tau_l}^{(j)})\right]
\end{equation}
by combination of \eqref{prop:recursion-speedup:eq1} with Proposition \ref{prop:recursion}, which holds uniformly on $T_{k,l}$ and for the zero-starting sequence $\big(\tilde{\alpha}_i^{(k,l)}\big)_{i\geq 0}\coloneqq\big((1-\delta_{i,0})\alpha^{(k,l)}_i\big)_{i\geq 0}$. Denoting $u_n\coloneqq \sum_{i=0}^\infty\tilde{\alpha}_i\bm{T}^n_{k,l}(\varphi_{\sigma_k}^{(i)}) + \sum_{j=0}^\infty\beta_j\bm{T}^n_{k,l}(\varphi_{\tau_l}^{(j)})$ for each $n\in\N_0$, we obtain that, pointwise for each $(s,t)\in T_{k,l}$,
\begin{equation}\label{prop:recursion-speedup:eq1_aux1}
\begin{aligned}
u_{n+1}(s,t) &= (\bm{T}_{k,l}(u_n))(s,t) = \rho_{k,l}\!\sum_{i,j=0}^\infty\hat{c}_{ij}^{(k,l)}\!\int_{\sigma_k}^s\!(\tilde{s}-\sigma_k)^i\,\mathrm{d}\tilde{s}\int_{\tau_l}^t\!(\tilde{t}-\tau_l)^j\,\mathrm{d}\tilde{t}\\
&= \sum_{i,j=0}^\infty(s-\sigma_k)^{i+1}\frac{\rho_{k,l}\hat{c}_{ij}^{(k,l;n)}}{(i+1)(j+1)}(t - \tau_l)^{j+1} = \left\langle\eta(s-\sigma_k), \big(\rho_{k,l}\underline{S}\hat{C}_n^{k,l}\underline{T}\big)\eta(t-\tau_l)\right\rangle,
\end{aligned}
\end{equation}
where from the second equality onwards we assumed that $u_n = \langle\eta(\cdot-\sigma_k), \hat{C}_n^{k,l}\eta(\cdot-\tau_l)\rangle$ for some $\hat{C}_n^{k,l}\equiv(\hat{c}_{ij}^{(k,l;n)})\in\ell_1(\N_0^2)$ (induction hypothesis). Since indeed
\begin{equation}\label{prop:recursion-speedup:eq1_aux2}
u_0 = \sum_{i=0}^\infty\tilde{\alpha}_i^{(k,l)}(s-\sigma_k)^i + \beta_i^{(k,l)}(t-\tau_l)^i = \langle\eta(\cdot-\sigma_k), \hat{C}_0^{k,l}\eta(\cdot-\tau_l)\rangle
\end{equation}
for $\hat{C}_0^{k,l}\coloneqq\big(\alpha_i^{(k,l)}\delta_{i0} + \beta_j^{(k,l)}\delta_{0j} - \kappa_{k-1,l-1}(\sigma_k,\tau_l)\delta_{0i}\cdot\delta_{0j}\big)_{\!i,j\geq 0}$, combining \eqref{prop:recursion-speedup:eq1_aux2},\eqref{prop:recursion-speedup:eq1_aux1}, \eqref{prop:recursion-speedup:aux1} proves
\begin{equation}\label{prop:recursion-speedup:eq1_aux3}
\kappa_{k,l} = \left.\big\langle \eta(\,\cdot-\sigma_k), \hat{C}^{k,l}\eta(\,\cdot - \tau_l)\big\rangle\right|_{T_{k,l}} \quad\text{for}\quad \hat{C}^{k,l} \coloneqq \sum_{n=0}^\infty\rho_{k,l}^n\underline{S}^n\hat{C}_0^{k,l}\underline{T}^n, 
\end{equation}
by induction on $n\in\N_0$. (Note that at this point, \eqref{prop:recursion-speedup:eq1_aux3} is established for $(k,l)=(1,1)$ only.) 

Since $(\underline{S}^n)_{i,j} =\frac{j!\delta_{i,j+n}}{(j+n)!}$ and $(\underline{T}^n)_{i,j} = \frac{i!\delta_{j,i+n}}{(i+n)!}$ for each $i,j\geq 0$ and $n\in\N_0$ by definition of $\underline{S}$, $\underline{T}$, 
\begin{equation}
\left(\underline{S}^n\hat{C}_0^{k,l}\underline{T}^n\right)_{\!i,j\geq 0} = \begin{cases}
\frac{\beta^{(k,l)}_{j-i}(j-n)!}{n!j!}, & i=n \ $\text{ and }$ \ j\geq n, \\
\frac{\alpha^{(k,l)}_{i-j}(i-n)!}{i!n!}, & i\geq n \ $\text{ and }$ \ j=n, \\
0, & \text{else}.
\end{cases} 
\end{equation}
Consequently, the matrix entries of $\hat{C}^{k,l}\eqqcolon(\hat{c}^{(k,l)}_{ij})_{i,j\geq 0}$ each read 
\begin{equation}\label{prop:recursion-speedup:eq1_aux4}
\hat{c}^{(k,l)}_{i,j} = \left.\begin{cases}
\frac{\rho_{k,l}^{\min(i,j)}\beta^{(k,l)}_{j-i}(j-\min(i,j))!}{\min(i,j)!j!}, & j\geq \min(i,j) \\
\frac{\rho_{k,l}^{\min(i,j)}\alpha^{(k,l)}_{i-j}(i-\min(i,j))!}{i!\min(i,j)!}, & i\geq \min(i,j)
\end{cases}\right\} = \big(A_{k,l}\odot B_{k,l}\odot W\big)_{i,j}\,, 
\end{equation}
implying $\hat{C}^{k,l}=\tilde{C}^{k,l}$, as desired. But then, in particular -- recalling \eqref{prop:recursion-speedup:eq1_aux3} and the definitions \eqref{prop:coefficientrecursion:eqn5} --
\begin{equation}
\begin{aligned}
\kappa_{k,l+1}(s,\tau_{l+1}) &= \big\langle \eta(s-\sigma_k), \hat{C}^{k,l}\eta(\tau_{l+1} - \tau_l)\big\rangle\\ 
&= \big\langle \eta(s-\sigma_k), (\alpha^{(k,l+1)}_r)_{r\geq 0}\big\rangle_{\ell^2(\N)} = \sum_{r=0}^\infty\alpha^{(k,l+1)}_r(s - \sigma_k)^r, \quad\text{and analogously}\\
\kappa_{k+1,l}(\sigma_{k+1},t) &= \big\langle (\beta^{(k+1,l)}_r)_{r\geq 0}, \eta(t-\tau_l)\big\rangle_{\ell^2(\N)} = \sum_{r=0}^\infty\beta^{(k+1,l)}_r(t - \tau_l)^r, 
\end{aligned}
\end{equation}
uniformly in $(s,t)\in T_{k,l}$. This---together with the fact that $\kappa_{k,1}(s,\tau_1)= 1$ $(=\sum_{r=0}^\infty\alpha_r^{(k,1)}(s-\sigma_k)^r$ for $(\alpha_r^{(k,1)})_{r\geq 0}\coloneqq(\delta_{0,r})_{r\geq 0}$ and all $s\in[\sigma_k,\sigma_{k+1}]$, for each $k=1,\ldots,\ell-1$) and $\kappa_{1,l}(\sigma_1,t)= 1$ $(=\sum_{r=0}^\infty\beta_r^{(1,l)}(t-\tau_1)^r$ for $(\beta_r^{(1,l)})_{r\geq 0}\coloneqq(\delta_{0,r})_{r\geq 0}$ and all $t\in[0,1]$, for each $l=1,\ldots,\ell-1$)---implies that \eqref{prop:recursion-speedup:eq1} holds also for $\eta=3$. In fact, the above argumentation---as stated---proves that if \eqref{prop:recursion-speedup:eq1} holds for some fixed $k,l\in\{1,\ldots,\ell-1\}$, then both \eqref{prop:recursion-speedup:eq1_aux3} and \eqref{prop:recursion-speedup:eq1_aux4} hold for this $(k,l)$ and also (provided $\max(k,l)\leq\ell-2$) that \eqref{prop:recursion-speedup:eq1} holds for $(k+1,l)$ and $(k,l+1)$. This proves the proposition.     
\end{proof}

\subsection{Gram Matrix Approximation Error}\label{pf:prop:gram-approximation}
The remarks on local truncation error in Section \ref{sect:error-analysis-and-limitations} can be directly extended to yield an explicit prior bound on the approximation error for the Gram matrix of a given family of time series: the double-sum structure of the local approximation error \eqref{eq:truncation-error} allows for direct control through modified Bessel tails, which can then be scaled from component-wise to matrix-level bounds.

Note that an estimate very similar to the one underlying the following proof was first presented in \cite{cass2025numerical}.

\begin{proposition}\label{prop:gram-approximation}
Let $G\equiv(G_{ij})\coloneqq \big(K_{\bm{x}^{(i)}\!,\, \bm{x}^{(j)}}(1,1)\big)_{i,j=1,\ldots,m}$ be the signature-kernel Gram matrix for a family $\mathcal{X}\equiv\big(\bm{x}^{(i)}\equiv(\bm{x}^{(i)}_l)_{l = 1, \ldots, \ell}\mid i = 1, \ldots, m\big)$ of time series in $\R^d$. Let further $\hat{G}_N\coloneqq\left(\hat{\kappa}^{[N]}_{\ell-1, \ell-1;\,\bm{x}^{(i)}\!,\, \bm{x}^{(j)}}(1,1)\right)_{\!i,j=1,\ldots,m}$, where $\hat{\kappa}^{[N]}_{\ell-1, \ell-1}$ is defined as in \eqref{prop:coefficientrecursion:eqn4} but for the matrices \begin{equation}
\tilde{C}^{k,l}_N\coloneqq A_{k,l}^{[N]}\odot B_{k,l}^{[N]}\odot W_N \qquad \big(k,l\in\{1,\ldots,\ell-1\}\big)
\end{equation} with $A_{k,l}^{[N]}\coloneqq\big(\rho_{k,l}^{\min(i,j)}\big)_{0\leq i,j \leq N}$ and $W_N\coloneqq (w_{i,j})_{0\leq i,j\leq N}$ and $B_{k,l}^{[N]}\equiv\big(b^{(k,l)|N}_{i,j}\big)_{0\leq i,j\leq N}$ for \begin{equation}\label{prop:coefficientrecursion:eqn5.0:approx}
b^{(k,l)|N}_{i,i+r}\coloneqq\alpha^{(k,l)|N}_r \qquad\text{and}\qquad b^{(k,l)|N}_{i,i-r}\coloneqq\beta^{(k,l)|N}_{|r|}, \qquad \text{for each } \ (i,r)\in\N_0^2\,,  
\end{equation}
where $\beta^{(1,1)|N}_r=\alpha^{(1,1)|N}_r\coloneqq\delta_{0,r}$ for each $r\in\N_0$ and, recursively $($recalling \eqref{prop:coefficientrecursion:eqn5} for notation$)$,
\begin{equation}\label{prop:coefficientrecursion:eqn5:approx}
\big(\alpha^{(k,l)|N}_r\big)_{r= 0}^N \coloneqq \tilde{C}^{k,(l-1)}_N\!\cdot\eta(1/(\ell-1)) \qquad\text{and}\qquad \big(\beta^{(k,l)|N}_r\big)_{r = 0}^N \coloneqq \big[\tilde{C}^{(k-1),l}_N\big]^{\dagger}\!\cdot\eta(1/(\ell-1)),
\end{equation}
for each $k,l\in\{1,\ldots,\ell-1\}$. Then we have the (truncation induced) approximation error estimate
\begin{equation}\label{prop:gram-approximation:eq1}
\left\|G - \hat{G}_N\right\| \,\leq\, \gamma_{m,\|\mathcal{X}\|_\infty}\frac{\|\mathcal{X}\|_\infty^{N+1}\zeta_N}{(N+1)!^2},
\end{equation}
with $\|\cdot\|$ the Frobenius norm, $\|\mathcal{X}\|_\infty\coloneqq\max_{1\leq i,j\leq m;\,1\leq k,l\leq \ell-1}\big|\Delta_k\bm{x}^{(i)}\Delta_l\bm{x}^{(j)}\big|$, and with
\begin{equation}
\gamma_{m,\|\mathcal{X}\|_\infty}\coloneqq \tfrac{m}{2}\prod_{\nu=0}^{2\ell-2} I_0\!\left(\tfrac{2\sqrt{\nu\|\mathcal{X}\|_\infty}}{\ell-1}\right) \quad\text{and}\quad \zeta_N\coloneqq \left[1 + \tfrac{2\ell-2}{N+2}\right]\!\!\left(\!\frac{2}{\ell-1}\!\right)^{\!N+1}.
\end{equation}
\end{proposition}
\begin{proof}
We proceed along the aforementioned lines, starting with the trivial norm inequality
\begin{equation}\label{prop:gram-approximation:aux1}
\big\|G - \hat{G}_N\big\| \,\leq\, m\max_{1\leq i,j \leq m}\varepsilon^{[N]}_{ij}, \quad\text{for}\quad \varepsilon^{[N]}_{ij}\coloneqq\big|K_{\bm{x}^{(i)}\!,\, \bm{x}^{(j)}}(1,1) - \hat{\kappa}^{[N]}_{\ell-1, \ell-1;\,\bm{x}^{(i)}\!,\, \bm{x}^{(j)}}(1,1)\big|.
\end{equation}
Now, for any fixed $(i,j)\in\{1,\ldots, m\}^{\times 2}$, the error $\varepsilon^{[N]}_{ij}$ reads, see (the last display of) Section \ref{pf:prop:recursion-speedup-v2},
\begin{equation}\label{prop:gram-approximation:aux2}
\varepsilon^{[N]}_{ij} = \big|\kappa_{\ell-1,\ell-1}(1,1) - \hat{\kappa}^{[N]}_{\ell-1,\ell-1}(1,1)\big| = \left|\sum_{r=0}^\infty\xi_r^{\ell'\!,\ell'}(1-\tau_{\ell-1})^r \!+ \tilde{\xi}_r^{\ell'\!,\ell'}(1 - \sigma_{\ell-1})^r\right|
\end{equation}
for some coefficient sequences $\xi^{\ell'\!,\ell'}\equiv(\xi^{\ell'\!,\ell'}_r)_{r\geq 0}$ and $\tilde{\xi}^{\ell'\!,\ell'}\equiv(\tilde{\xi}^{\ell'\!,\ell'}_r)_{r\geq 0}$ (where we suppressed the dependence on $(i,j)$ and $N$ and denoted $\ell'\coloneqq \ell-1$ to ease notation). By linearity of the Goursat PDE \eqref{eqn:GoursatPDE} and its uniqueness-of-solution, we find by comparing coefficients (see Section \ref{pf:prop:recursion-speedup-v2}) that: $\xi^{\ell'\!,\ell'} = \alpha^{\ell'\!,\ell'} - \hat{\alpha}^{\ell'\!,\ell'}_N$ and $\tilde{\xi}^{\ell'\!,\ell'} = \beta^{\ell'\!,\ell'} - \hat{\beta}^{\ell'\!,\ell'}_N$ for the formerly defined coefficients $\alpha^{k,l}\equiv(\alpha^{(k,l)}_r)_{r\geq 0}$ and $\hat{\alpha}^{k,l}_N\equiv(\alpha^{(k,l)|N}_r)_{r\geq 0}$ and $\beta^{k,l}\equiv(\beta^{(k,l)}_r)_{r\geq 0}$ and $\hat{\beta}^{k,l}_N\equiv(\beta^{(k,l)|N}_r)_{r\geq 0}$, and that (as we recall)
\begin{equation}\label{prop:gram-approximation:aux3}
(\alpha^{k,l}, \beta^{k,l}) = \mathfrak{A}_{k,l}(\alpha^{k,l-1}, \beta^{k-1,l}) \quad\text{and}\quad (\hat{\alpha}^{k,l}_N, \hat{\beta}^{k,l}_N) = \hat{\mathfrak{A}}_{k,l}(\hat{\alpha}_N^{k,l-1},\hat{\beta}_N^{k-1,l})    
\end{equation} 
for the (bounded) linear `ADM-type' Goursat-solution operators $\mathfrak{A}_{k,l}, \hat{\mathfrak{A}}_{k,l} : \ell_1(\N_0)^{\times 2}\rightarrow\ell_1(\N_0)^{\times 2}$ defined (recursively) in Proposition \ref{prop:recursion-speedup-v2} and (via projection $\ell_1(\N_0)\twoheadrightarrow\R^{N+1}$) Proposition \ref{prop:gram-approximation}, respectively. \mbox{In this conceptualization, the coefficients $(\xi^{\ell'\!,\ell'}\!, \tilde{\xi}^{\ell'\!,\ell'})$ of the approximation error \eqref{prop:gram-approximation:aux2} read:} 
\begin{align}
(\xi^{\ell'\!,\ell'}\!, \tilde{\xi}^{\ell'\!,\ell'}) &= \mathfrak{A}_{\ell'\!,\ell'}(\alpha^{\ell'\!,\ell'-1}\!, \beta^{\ell'-1,\ell'}) - \hat{\mathfrak{A}}_{\ell'\!,\ell'}(\hat{\alpha}^{\ell'\!,\ell'-1}_N\!, \hat{\beta}^{\ell'-1,\ell'}_N)\\
&\hspace{-5em}= \big(\pi_{(N,\infty)}\!\circ\!\mathfrak{A}_{\ell'\!,\ell'}\big)(\alpha^{\ell'\!,\ell'-1}\!, \beta^{\ell'-1,\ell'}) 
   + \Big[\big(\pi_{[0,N]}\!\circ\!\mathfrak{A}_{\ell'\!,\ell'}\big)(\alpha^{\ell'\!,\ell'-1}\!, \beta^{\ell'-1,\ell'})- \,\hat{\mathfrak{A}}_{\ell'\!,\ell'}(\hat{\alpha}^{\ell'\!,\ell'-1}_N\!, \hat{\beta}^{\ell'-1,\ell'}_N)\Big]\\
&\hspace{-3em}= \big(\pi_{(N,\infty)}\!\circ\!\mathfrak{A}_{\ell'\!,\ell'}\big)(\alpha^{\ell'\!,\ell'-1}\!, \beta^{\ell'-1,\ell'}) + \big(\pi_{[0,N]}\!\circ\!\mathfrak{A}_{\ell'\!,\ell'}\big)(\alpha^{\ell'\!,\ell'-1} - \hat{\alpha}^{\ell'\!,\ell'-1}_N\!, \beta^{\ell'-1,\ell'}-\hat{\beta}^{\ell'-1,\ell'}_N)\\
&= \big(\pi_{(N,\infty)}\!\circ\!\mathfrak{A}_{\ell'\!,\ell'}\big)(\alpha^{\ell'\!,\ell'-1}\!, \beta^{\ell'-1,\ell'}) + \big(\pi_{[0,N]}\!\circ\!\mathfrak{A}_{\ell'\!,\ell'}\big)(\xi^{\ell'\!,\ell'-1},\tilde{\xi}^{\ell'-1,\ell'})\\\label{prop:gram-approximation:aux4:5}
&= \big(\pi_{(N,\infty)}\!\circ\!\mathfrak{A}_{\ell'\!,\ell'}\big)(\alpha^{\ell'\!,\ell'-1}\!, \beta^{\ell'-1,\ell'}) 
+ \big(\pi_{[0,N]}\!\circ\!\mathfrak{A}_{\ell'\!,\ell'}\big)\!\big[(\xi^{\ell'\!,\ell'-1},\tilde{\xi}^{\ell'-1,\ell'})^{\leq N}\big]\\
&\qquad\qquad\qquad\qquad\qquad\qquad\qquad\quad\,+ \big(\pi_{[0,N]}\!\circ\!\mathfrak{A}_{\ell'\!,\ell'}\big)\big[(\xi^{\ell'\!,\ell'-1},\tilde{\xi}^{\ell'-1,\ell'})^{>N}\big]\,,
\end{align}
for the projection $\pi_{[0,N]} : \ell_1(\N_0)^{\times 2}\ni a\equiv \big(a_r^{(1)}, a_r^{(2)}\big)_{r\geq 0}\mapsto (a_r^{(1)}\mathbbm{1}_{[0,N]}(r),a_r^{(2)}\mathbbm{1}_{[0,N]}(r))_{r\geq 0}\eqqcolon a^{\leq N}\in\ell_1(\N_0)^{\times 2}$ and its defect $\pi_{(N,\infty)}\coloneqq\mathrm{id}_{\ell_1(\N_0)^\times 2}-\pi_{[0,N]}$, with $a^{>N}\coloneqq\pi_{(N,\infty)}(a)$ for each $a\in\ell_1(\N_0)$. Note that the third of the above identities holds by the linearity of $\mathfrak{A}_{k,l}$ and since the operators $\pi_{[0,N]}\circ\mathfrak{A}_{k,l}$ and $\hat{\mathfrak{A}}_{k,l}$ coincide on the subspace $\pi_{[0,N]}(\ell_1(\N_0)^{\times 2})$. 

Denoting the summands in \eqref{prop:gram-approximation:aux4:5} by $\varsigma^{(1)}$, $\varsigma^{(2)}$, and $\varsigma^{(3)}$, resp., direct (but tedious) computations show:
\begin{align}\label{prop:gram-approximation:aux5:1}
\max_{\nu=1,2}\big|\varsigma_r^{(1)|\nu}\big|\,&\leq\, \varphi(2\ell'-1)\frac{(2\ell'|\mathfrak{x}_{ij}|)^r}{r!^2(\ell')^r},\\\label{prop:gram-approximation:aux5:2}
\max_{\nu=1,2}\big|\varsigma_r^{(2)|\nu}\big|\,&\leq\, I_0\!\left(\tfrac{2\sqrt{(2\ell'-1)|\mathfrak{x}_{ij}|}}{\ell-1}\right)\frac{(2\ell'|\mathfrak{x}_{ij}|)^r}{2(\ell')^rr!^2}\max_{\xi=\xi^{\ell'\!,\ell'}\!, \tilde{\xi}^{\ell',\ell'};\,\tilde{r}\geq 0}\frac{\tilde{r}!^2(\ell')^{\tilde{r}}|\xi_{\tilde{r}}|}{((2\ell'-1)|\mathfrak{x}_{ij}|)^{\tilde{r}}},\\\label{prop:gram-approximation:aux5:3} 
\max_{\nu=1,2}\big|\varsigma_r^{(3)|\nu}\big|\,&\leq\, \tilde{\eta}_{\ell',N}\,I_0\!\left(\tfrac{2\sqrt{(2\ell'-1)|\mathfrak{x}_{ij}|}}{\ell-1}\right)\frac{(2\ell'|\mathfrak{x}_{ij}|)^r}{2(\ell')^rr!^2}\max_{\xi=\alpha^{\ell'\!,\ell'}\!, \beta^{\ell',\ell'};\,\tilde{r}\geq 0}\frac{\tilde{r}!^2(\ell')^{\tilde{r}}|\xi_{\tilde{r}}|}{((2\ell'-1)|\mathfrak{x}_{ij}|)^{\tilde{r}}},\\\label{prop:gram-approximation:aux5:4}
|\eta^{\ell'\!,\ell'}_r|\,&\leq\,\varphi(2\ell'-2)\frac{((2\ell'-1)|\mathfrak{x}_{ij}|)^{r}}{r!^2(\ell')^{r}} \qquad \big(\eta=\alpha,\beta\big)
\end{align}
for each $r\in\N_0$, with $I_0 : x \mapsto I_0(x)$ the modified Bessel function of the first kind of order zero (cf.\ Lemma \ref{lem:single-tile-solution}) and $|\mathfrak{x}_{ij}|\coloneqq\max_{1\leq k,l\leq\ell-1}|\Delta_k\bm{x}^{(i)}\Delta_l\bm{x}^{(j)}|$, and for the function $\varphi(u)\coloneqq\frac{1}{2}\prod_{\nu=0}^uI_0(2\sqrt{\nu|\mathfrak{x}_{ij}}|/\ell'))$ and $\tilde{\eta}_{\ell',N}\coloneqq\frac{(2\ell'-1)^{N+1}|\mathfrak{x}_{ij}|^{N+1}}{(\ell')^{(2N+2)}(N+1)!^2}$; see Sections \ref{pf:prop:coefficientrecursion} and \ref{pf:prop:recursion-speedup-v2} and cf.\ also \citep[Props.\ 3.2, 3.3, 3.4]{cass2025numerical}. Applying the estimates \eqref{prop:gram-approximation:aux5:1}, \eqref{prop:gram-approximation:aux5:2}, \eqref{prop:gram-approximation:aux5:3}, \eqref{prop:gram-approximation:aux5:4} (on the size of the additive components $\varsigma^{(i)}$ in \eqref{prop:gram-approximation:aux4:5}) back to \eqref{prop:gram-approximation:aux2} via the triangle inequality and the auxiliary estimates 
\begin{equation}\label{prop:gram-approximation:aux6}
\begin{aligned}
\big|\varsigma_r^{(2)} \!+ \varsigma_r^{(3)}\big|_{1,1} \,&\leq\, \sum_{\mu=2,3;\,\nu=1,2}\big|\varsigma_r^{(\mu)|\nu}\big| \,\leq\, \left[\theta_{\xi,\tilde{\xi}}\,I_0\!\left(\tfrac{2\sqrt{(2\ell'-1)|\mathfrak{x}_{ij}|}}{\ell-1}\right) + \varphi(2\ell'-1)\tilde{\eta}_{\ell',N}\right]\!\frac{(2\ell'|\mathfrak{x}_{ij}|)^r}{(\ell')^r r!^2},\\
&\hspace{-6em}\leq\, \varphi(2\ell'-1)\dbtilde{\eta}_{\ell'\!,N}\frac{(2\ell'|\mathfrak{x}_{ij}|)^r}{(\ell')^r r!^2}\sum_{n=0}^{2\ell'-1}n^{N+1} \,\leq\, \varphi(2\ell'-1)\dbtilde{\eta}_{\ell'\!,N}\frac{(2\ell')^{N+2} -1}{N+2}\frac{(2\ell'|\mathfrak{x}_{ij}|)^r}{(\ell')^r r!^2} \quad(r\in\N_0)
\end{aligned}
\end{equation}
with $\theta_{\xi,\tilde{\xi}}\coloneqq\max_{\xi=\xi^{\ell'\!,\ell'}\!, \tilde{\xi}^{\ell',\ell'};\,\tilde{r}\geq 0}\frac{\tilde{r}!^2(\ell')^{\tilde{r}}|\xi_{\tilde{r}}|}{((2\ell'-1)|\mathfrak{x}_{ij}|)^{\tilde{r}}}$ and $\dbtilde{\eta}_{\ell'\!,N}\coloneqq\frac{\tilde{\eta}_{\ell'\!,N}}{(2\ell'-1)^{N+1}}$, then yields the error bound 
\begin{align}\label{prop:gram-approximation:aux7}
\varepsilon^{[N]}_{ij} \,&\leq\, \sum_{r=0}^N|\xi_r^{\ell'\!,\ell'} \!+ \tilde{\xi}_r^{\ell'\!,\ell'}|(\ell')^{-r} + \sum_{r=N+1}^\infty|\xi_r^{\ell'\!,\ell'} \!+ \tilde{\xi}_r^{\ell'\!,\ell'}|(\ell')^{-r}\\
&=\, \sum_{r=0}^N\big|\varsigma_r^{(2)} \!+ \varsigma_r^{(3)}\big|_{1,1}(\ell')^{-r} + \sum_{r=N+1}^\infty\big|\varsigma_r^{(1)}\big|(\ell')^{-r}\\
&\leq\, \varphi(2\ell'-1)\!\left[\dbtilde{\eta}_{\ell'\!,N}\frac{(2\ell')^{N+2}-1}{N+2}\sum_{r=0}^N\frac{(2\ell'|\mathfrak{x}_{ij}|)^r}{(\ell')^{2r} r!^2} + \sum_{r=N+1}^\infty\frac{(2\ell'|\mathfrak{x}_{ij}|)^r}{(\ell')^{2r} r!^2}\right]\\
&\leq\, \varphi(2\ell')\!\left[\frac{2^{N+2}|\mathfrak{x}_{ij}|^{N+1}}{(\ell')^{N}(N+1)!^2(N+2)} + \frac{2^{N+1}|\mathfrak{x}_{ij}|^{N+1}}{(\ell')^{N+1}(N+1)!^2}\right]\,=\, \varphi(2\ell')\zeta_N\frac{|\mathfrak{x}_{ij}|^{N+1}}{(N+1)!^2}.
\end{align} 
Note that the second and third inequality in \eqref{prop:gram-approximation:aux6} follow from \eqref{prop:gram-approximation:aux5:2} and \eqref{prop:gram-approximation:aux5:3} via a straightforward induction, see also \citep[Prop.\ 3.3]{cass2025numerical}. The desired inequality \eqref{prop:gram-approximation:eq1} now follows immediately by combination of \eqref{prop:gram-approximation:aux7} and \eqref{prop:gram-approximation:aux1}, using also that the function $[0,\infty)\ni u\mapsto I_0\!\left(\tfrac{2\sqrt{\nu u}}{\ell-1}\right)$ is monotone.
\end{proof} 

\section{Benchmarking}\label{sect:vs_polysigkernel}
\subsection{Downstream Experiments: Additional Figures}\label{sect:downstream-figures}
Complete implementation details and hyperparameter grids for the experiments underlying the following figures (see Section \ref{sec:experiments}) are available in the \texttt{PowerSig} GitHub repository at
\begin{equation}
\href{https://github.com/geekbeast/powersig}{\texttt{https://github.com/geekbeast/powersig}}
\end{equation}

\begin{figure}[htbp]
\centering
\includesvg[width=\textwidth]{bitcoin_36_linear_comparison.svg}
\caption{\emph{Bitcoin price regression (two-day rolling average). Top: training fit; bottom: test fit. \texttt{PowerSig} attains $2.81\%$ MAPE (under the default linear static kernel) compared to $3.23\%$ MAPE for the RBF-assisted \texttt{KSig-PDE}, while using only a fraction of the device memory.}}
\label{fig:supp-bitcoin}
\end{figure}

\begin{figure}[htbp]
\centering
\includesvg[width=0.9\textwidth]{eigenworm_classification_results.svg}
\caption{UEA Eigenworms classification across window lengths $L$. Test accuracy versus input window length $L$ for \texttt{PowerSig}, \texttt{KSig-PDE}, and the RFF baseline \texttt{RFSF-TRP}. \texttt{PowerSig} remains competitive and scales to $L=1048$ with $61.1\%$ accuracy; \texttt{KSig-PDE} is competitive up to $L=128$ before running out of memory (OOM). \texttt{RFSF-TRP} attains a slightly higher peak of $62.5\%$ at $L=128$ but OOMs for larger $L$, consistent with the storage advantages in Fig.~\ref{fig:memory_and_duration}. This shows that extending the input window, enabled here at scale by \texttt{PowerSig}, can narrow performance gaps often ascribed to inductive bias while preserving feasibility.}
\label{fig:supp-eigenworms}
\end{figure}

\begin{figure}[htbp]
\centering
\includesvg[width=\textwidth]{recurrence_accuracy_seed_1903.svg}
\caption{On long-horizon periodic signals, SVM-regression error versus input-window length for synthetic near-periodic time series with adjustable period. For representative instances, error decreases monotonically as windows span multiple periods. \texttt{PowerSig} sustains this favorable trend at window lengths beyond the reach of conventional or low-rank signature-kernel baselines, while maintaining peak memory that grows linearly with window length.
}
\label{fig:supp-recurrence}
\end{figure}

\begin{figure}[htbp]
\centering
\includesvg[width=0.9\textwidth]{regression_sample_1_seed_1903.svg}
\caption{Representative near-periodic synthetic time series with adjustable period length (industrial/sensing proxy) used in the long-horizon experiments above. This instance illustrates the quasi-periodic structure across multiple cycles on which SVM-regression is evaluated; as input windows for SVM-regressors span several periods, their error decreases monotonically, with \texttt{PowerSig} sustaining this trend at longer windows while preserving linear peak-memory growth (Figure \ref{fig:supp-recurrence}).}
\label{fig:supp-recurrence_ts}
\end{figure}

\newpage
\subsection{Comparison Against \texttt{polysigkernel}}\label{supp:vs-polysig}
We benchmarked and contrasted our method (\texttt{PowerSig}) against the concurrent work \texttt{polysigkernel} from Cass et al.\ \cite{cass2025numerical}. Both our approach and theirs were developed independently and released within two weeks of each other, with our preprint and theirs sharing the same arXiv month stamp.

Using the public JAX implementation of $\texttt{polysigkernel}$, we ran identical experiments on computation time and memory usage over two standard benchmarks: $(\alpha)$ fixed-length (= 512) paths of increasing dimension, ranging from 2 to 4096, and $(\beta)$ 2D Brownian motion paths of length 2 to 512. The results, shown in Figures \ref{fig:dimension} and \ref{fig:vs-polysig_acc_run}, respectively, show that both solvers achieve essentially identical accuracy throughout while $\texttt{PowerSig}$ performs better in runtime and memory usage. In benchmark $(\beta)$, $\texttt{PowerSig}$ was on average $3.1\%$ faster and required $23\%$ less memory than $\texttt{polysigkernel}$ on oscillatory time series of length 512 (averaged over 10 iid samples). In benchmark $(\alpha)$, $\texttt{PowerSig}$ exhibited a comparable computational advantage, being $8.5\%$ faster on average at dimension 512, with similar memory usage.

\begin{figure}[htbp]
\centering
\includesvg[width=\textwidth]{powersig_vs_polysig_dimension_comparison.svg}
\caption{Scaling wrt.\ dimension (benchmark $(\alpha)$): Runtime and peak memory at fixed length $\ell=512$ with dimension ranging from $d=2$ to $d=4096$. \texttt{PowerSig} is $8.5\%$ faster on average at dimension $512$, with similar memory usage.}
\label{fig:dimension}
\end{figure}

\begin{figure}[htbp]
\centering
\includesvg[width=\textwidth]{powersig_vs_polysig_comparison.svg}
\caption{Scaling wrt.\ time series length (benchmark $(\beta)$): Runtime and peak memory on 2D Brownian paths with length ranging from $\ell=2$ to $\ell=512$ for \texttt{PowerSig} and \texttt{polysigkernel}. Accuracy is essentially identical, while \texttt{PowerSig} is $3.1\%$ faster on average and uses $23\%$ less memory at length $512$ (mean over $10$ iid samples).}
\label{fig:vs-polysig_acc_run}
\end{figure}

Let us clarify why these empirical differences occur and how our underlying methods differ in concept despite propagating the same local power‑series map: $\texttt{polysigkernel}$ starts from an explicit Riemann‑function formula to rewrite the Goursat solution as a sum of integrated modified Bessel functions $I_{0}$ \citep[Thm. 3.1 (Polyanin)]{cass2025numerical} which, after expanding each $I_{0}$ in its well‑known power series, allows the kernel on every rectangle to be reduced (using integration‑by‑parts and a change of variables) to two univariate centred power series in $s$ and $t$, for which the resulting coefficient arrays can then be generated by the closed‑form recurrence $\Lambda_{C;[\sigma_{k},\sigma_{k+1}]\times[\tau_{l},\tau_{l+1}]}$  in \citep[Props.\ 3.1, A.1, A.2]{cass2025numerical}. Truncating these expansions up to some fixed degree $N$ yields a recursive polynomial approximation scheme ($\texttt{polysigkernel}$) for which the authors provide local and global truncation error bounds \citep[Prop.\ 3.4; Thm.\ 3.2]{cass2025numerical}. (A lower-performant Chebyshev‑interpolation scheme for polynomial boundary data is also discussed in \citep[Section 3.3]{cass2025numerical}.) Our approach, by contrast, approaches the same PDE from a functional-analytic view: Using the Volterra formulation of the Goursat problem, we express the kernel propagator through an integral operator of spectral radius zero (Lemma \ref{lem:spectralradius}) to obtain a concise, three-step Neumann recursion (Proposition \ref{prop:recursion}) whose iterates build the same local power series coefficient-by-coefficient in a way that allows for their immediate coefficient extraction on the fly (Proposition \ref{prop:recursion-speedup-v2}). The resulting iteration adapts per tile and stops once the next term is below machine precision (typically $5$–$8$ iterations; no global cut‑off $N$ required), leading to a one-strip memory profile that is below the quadratic footprint of the reference \texttt{polysigkernel} implementation.

In summary, $\texttt{polysigkernel}$ obtains each tile's coefficients from a closed‑form Bessel (or Chebyshev) recurrence evaluated up to a user‑chosen degree $N$, whereas $\texttt{PowerSig}$ builds the same coefficients adaptively via a Neumann iteration that stops once the next term drops below machine precision. Both implement the same local propagation map and are analytically interchangeable per tile, with their different constructions yielding distinct practical profiles with a modest but consistent performance edge for $\texttt{PowerSig}$ our matched head‑to‑head benchmarks (Figures \ref{fig:vs-polysig_acc_run} and \ref{fig:dimension}).

\section{Summary of \texttt{PowerSig}: Methodology, Implementation, and Limitations}

\subsection{Algorithm}
The core idea behind our proposed signature kernel approximation method, \texttt{PowerSig}, is illustrated in Figure \ref{fig:illustration} of the main text. A summarizing description, in pseudocode, of an efficient Python implementation of this method—also used for our benchmarks—is provided in Figure \ref{algorithm:powersig} below.

\begin{figure}[htbp]
\centering
\fbox{\begin{minipage}{0.95\textwidth}
\textbf{Algorithm: PowerSig}
\begin{algorithmic}[1]
\Procedure{ProcessDiagonals}{$X[0\,..\,cols{-}1],\,Y[0\,..\,rows{-}1],\,order$}
    \State Initialize $\mathbf{s}^{(0)}[0] \gets [1, 0, \dots, 0]$
    \State Initialize $\mathbf{t}^{(0)}[0] \gets [1, 0, \dots, 0]$

    \For{$d \gets 0 \text{ to } rows + cols - 3$}
        \State $start\_row \gets \max(0,\, d - (cols - 1))$
        \State $end\_row \gets \min(d,\, rows - 1)$
        \State $L \gets end\_row - start\_row + 1$
        \State Initialize $\mathbf{s}^{(d+1)}[0\,..\,L],\,\mathbf{t}^{(d+1)}[0\,..\,L]$ as zero vectors

        \For{$k \gets 0 \text{ to } L-1$}
            \State $i \gets start\_row + k$
            \State $j \gets d - i$
            \If{$i+1 < rows \text{ and } j+1 < cols$}
                \State Compute $\rho \gets \langle X[j+1] - X[j],\, Y[i+1] - Y[i] \rangle$
                \State Form Toeplitz matrix $U$ from $\mathbf{t}^{(d)}[k]$ (first column) and $\mathbf{s}^{(d)}[k][1:]$ (first row)
                \State Form $R_{ij} \gets \rho^{\min(i,j)}$ for $i,j = 0, \dots, \text{order}$
                \State $\mathbf{v}_{\text{col}} \gets [\alpha^0, \dots, \alpha^{\text{order}}]^T,\quad\alpha = \frac{1}{cols - 1}$
                \State $\mathbf{v}_{\text{row}} \gets [\beta^0, \dots, \beta^{\text{order}}],\quad\beta = \frac{1}{rows - 1}$

                \State $\mathbf{s}_{\text{new}} \gets \mathbf{v}_{\text{row}} \cdot (U \circ R)$
                \State $\mathbf{t}_{\text{new}} \gets (U \circ R) \cdot \mathbf{v}_{\text{col}}$

                \If{$j < cols - 1$} \Comment{Before right edge}
                    \If{$k > 0$}
                        \State $\mathbf{s}^{(d+1)}[k+1] \gets \mathbf{s}_{\text{new}}$
                    \Else
                        \State $\mathbf{s}^{(d+1)}[0] \gets [1, 0, \dots, 0]$ \Comment{Initial condition}
                    \EndIf
                    \State $\mathbf{t}^{(d+1)}[k] \gets \mathbf{t}_{\text{new}}$
                \Else \Comment{At or after right edge}
                    \State $\mathbf{s}^{(d+1)}[k] \gets \mathbf{s}_{\text{new}}$
                    \If{$k > 0$}
                        \State $\mathbf{t}^{(d+1)}[k-1] \gets \mathbf{t}_{\text{new}}$
                    \Else
                        \State $\mathbf{t}^{(d+1)}[k] \gets [1, 0, \dots, 0]$ \Comment{Initial condition at far edge}
                    \EndIf
                \EndIf
            \EndIf
        \EndFor
    \EndFor
\EndProcedure
\end{algorithmic}
\end{minipage}}
\caption{Our implementation of \texttt{PowerSig}, shown here, is based on the algorithmic approach that we developed in Section \ref{sec:methodology} and summarized in Figure \ref{fig:illustration}.}
\label{algorithm:powersig}
\end{figure}

\subsection{On Benchmarking}
We benchmarked algorithms systematically in terms of runtime, memory usage, and accuracy. To ensure consistency and reproducibility, we developed a standardized benchmarking framework comprising a base benchmarking class, a custom context manager, and a proxy wrapper of the CuPy allocator. While this setup directly supported PyTorch and CuPy, special considerations were necessary for JAX, as its XLA backend does not natively allow resetting peak memory usage between runs. We circumvented this limitation by isolating each JAX benchmarking run in a separate Python subprocess, ensuring accurate GPU memory measurements and preventing JAX's default behavior of pre-allocating 75\% of GPU memory. Additionally, to prioritize numerical accuracy required by our experiments, we configured JAX explicitly to use 64-bit (f64) floating-point precision, overriding its default preference for computational speed.

\subsection{Error Analysis and Limitations}\label{sect:error-analysis-and-limitations}

We identify two primary limitations of our proposed method for computing signature kernels: numerical stability (truncation errors) for very large $|\rho_{\bm{x}, \bm{y}}|$ (see also Proposition \ref{prop:gram-approximation} for details), and computational overhead due to JAX recompilation.

\paragraph{Truncation Error} Our computation of the signature kernel effectively involves linear operations to evaluate polynomial expansions, the latter given by the power series \eqref{prop:coefficientrecursion:eqn4} for which by construction:
\begin{equation}
\kappa_{k,l}^{[N]}(s,t)\coloneqq\sum_{i,j=0}^N\tilde{c}^{(k,l)}_{i,j}(s-\sigma_k)^i(t-\tau_l)^j \equiv \sum_{i,j=0}^N\frac{\rho_{k,l}^{\min(i,j)}(s-\sigma_k)^i(t-\tau_l)^j}{\prod_{\substack{\mu=1+j-\min(i,j)\\\nu=1+i-\min(i,j)}}^{N} \mu\nu} \quad\big((s,t)\in T_{k,l}\big),
\end{equation}
for any truncation order $N\in\N$. Evaluating this at the tile boundary point \((\sigma_{k+1}, \tau_{l+1})\) then yields:
\begin{equation}\label{eq:truncation-error}
\left\|\kappa_{k,l} - \kappa_{k,l}^{[N]}\right\|_{\infty; T_{k,l}} \lesssim\sum_{i,j=N+1}^{\infty}|\rho_{k,l}|^{\min(i,j)}(\Delta \sigma)^i(\Delta \tau)^j
\end{equation}
for $\Delta \sigma\coloneqq\sigma_{k+1} - \sigma_k$ and $\Delta\tau\coloneqq\tau_{l+1} - \tau_l$.  
As $\Delta\sigma, \Delta\tau \rightarrow 0$ with increasing time series length, terms involving $(\Delta\sigma)^i$ and $(\Delta\tau)^j$ will rapidly fall below machine precision at a speed counterbalanced by the magnitude of the $|\rho_{k,l}|$ values; large values of $|\rho_{k,l}|$ significantly amplify the truncation errors \eqref{eq:truncation-error}, necessitating higher polynomial orders $N$ (which again are bounded by numerical precision).

However, truncation errors are typically localized, decaying rapidly across tiles, which helps to limit the overall impact of a few tiles with higher error on the accuracy of the final solution. To further mitigate truncation issues, our implementation configures JAX to use 64-bit floating-point precision (f64) and includes a mechanism for estimating the minimal required truncation order $N$ to meet a specified truncation error tolerance. Roughly speaking, we evaluate the intermediate ADM coefficient matrix for the maximum $|\rho_{k,l}|$ in the grid and then compute the sum of all entries in the last column, scaled by $\Delta \sigma^N$, and in the last row, scaled by $\Delta \tau^N$. This provides a bound on the truncation error and allows for a straightforward search over $N = 8, \dots, 64$ to identify a suitable truncation order for a given set of kernel-constituting time series.

A more detailed discussion of approximation errors, including a rigorous analysis of the local truncation errors \eqref{eq:truncation-error}, is provided in Section \ref{pf:prop:gram-approximation}.

\paragraph{JAX Recompilation} JAX's compilation mechanism can incur significant overhead when processing batches of time series with varying lengths (``jagged'' datasets). One potential mitigation strategy—using re-interpolating to pad each series to a uniform length—is not ideal, as it may distort the underlying temporal spacing and thus affect signature kernel computations. However, most datasets are trimmed and chunked to the same length so this is typically a minor limitation as long as cardinality of the set of input shapes is not approximately the same as the number of time series being compared.

\newpage
\section*{NeurIPS Paper Checklist}

\begin{enumerate}

\item {\bf Claims}
    \item[] Question: Do the main claims made in the abstract and introduction accurately reflect the paper's contributions and scope?
    \item[] Answer: \answerYes
    \item[] Justification: The abstract and introduction include the claims made in the paper.
    \item[] Guidelines:
    \begin{itemize}
        \item The answer NA means that the abstract and introduction do not include the claims made in the paper.
        \item The abstract and/or introduction should clearly state the claims made, including the contributions made in the paper and important assumptions and limitations. A No or NA answer to this question will not be perceived well by the reviewers. 
        \item The claims made should match theoretical and experimental results, and reflect how much the results can be expected to generalize to other settings. 
        \item It is fine to include aspirational goals as motivation as long as it is clear that these goals are not attained by the paper. 
    \end{itemize}

\item {\bf Limitations}
    \item[] Question: Does the paper discuss the limitations of the work performed by the authors?
    \item[] Answer: \answerYes 
    \item[] Justification: In the appendix (see supplementary material), we address truncation error and JAX recompilation as potential limitations in applying our approach.
    \item[] Guidelines:
    \begin{itemize}
        \item The answer NA means that the paper has no limitation while the answer No means that the paper has limitations, but those are not discussed in the paper. 
        \item The authors are encouraged to create a separate "Limitations" section in their paper.
        \item The paper should point out any strong assumptions and how robust the results are to violations of these assumptions (e.g., independence assumptions, noiseless settings, model well-specification, asymptotic approximations only holding locally). The authors should reflect on how these assumptions might be violated in practice and what the implications would be.
        \item The authors should reflect on the scope of the claims made, e.g., if the approach was only tested on a few datasets or with a few runs. In general, empirical results often depend on implicit assumptions, which should be articulated.
        \item The authors should reflect on the factors that influence the performance of the approach. For example, a facial recognition algorithm may perform poorly when image resolution is low or images are taken in low lighting. Or a speech-to-text system might not be used reliably to provide closed captions for online lectures because it fails to handle technical jargon.
        \item The authors should discuss the computational efficiency of the proposed algorithms and how they scale with dataset size.
        \item If applicable, the authors should discuss possible limitations of their approach to address problems of privacy and fairness.
        \item While the authors might fear that complete honesty about limitations might be used by reviewers as grounds for rejection, a worse outcome might be that reviewers discover limitations that aren't acknowledged in the paper. The authors should use their best judgment and recognize that individual actions in favor of transparency play an important role in developing norms that preserve the integrity of the community. Reviewers will be specifically instructed to not penalize honesty concerning limitations.
    \end{itemize}

\item {\bf Theory assumptions and proofs}
    \item[] Question: For each theoretical result, does the paper provide the full set of assumptions and a complete (and correct) proof?
    \item[] Answer: \answerYes 
    \item[] Justification: The theoretical results of the paper are presented as fully proven lemmas and propositions, with all assumptions explicitly stated as premises of those results. All proofs can be found in Appendix A of the supplementary material.
    \item[] Guidelines:
    \begin{itemize}
        \item The answer NA means that the paper does not include theoretical results. 
        \item All the theorems, formulas, and proofs in the paper should be numbered and cross-referenced.
        \item All assumptions should be clearly stated or referenced in the statement of any theorems.
        \item The proofs can either appear in the main paper or the supplemental material, but if they appear in the supplemental material, the authors are encouraged to provide a short proof sketch to provide intuition. 
        \item Inversely, any informal proof provided in the core of the paper should be complemented by formal proofs provided in appendix or supplemental material.
        \item Theorems and Lemmas that the proof relies upon should be properly referenced. 
    \end{itemize}

    \item {\bf Experimental result reproducibility}
    \item[] Question: Does the paper fully disclose all the information needed to reproduce the main experimental results of the paper to the extent that it affects the main claims and/or conclusions of the paper (regardless of whether the code and data are provided or not)?
    \item[] Answer: \answerYes 
    \item[] Justification: Accurately benchmarking signature kernel implementations across diverse frameworks (e.g., NumPy, CuPy, JAX, Torch) in terms of runtime, GPU, and CPU memory usage is notably challenging. To address this, our open-source library provides a context-manager-based benchmarking framework, featuring abstract base classes that facilitate reproducibility and transparent benchmarking. Additionally, we supply all utilized seeds, runtime parameters, and data assets to ensure reliable evaluation and reproducibility.
    \item[] Guidelines:
    \begin{itemize}
        \item The answer NA means that the paper does not include experiments.
        \item If the paper includes experiments, a No answer to this question will not be perceived well by the reviewers: Making the paper reproducible is important, regardless of whether the code and data are provided or not.
        \item If the contribution is a dataset and/or model, the authors should describe the steps taken to make their results reproducible or verifiable. 
        \item Depending on the contribution, reproducibility can be accomplished in various ways. For example, if the contribution is a novel architecture, describing the architecture fully might suffice, or if the contribution is a specific model and empirical evaluation, it may be necessary to either make it possible for others to replicate the model with the same dataset, or provide access to the model. In general. releasing code and data is often one good way to accomplish this, but reproducibility can also be provided via detailed instructions for how to replicate the results, access to a hosted model (e.g., in the case of a large language model), releasing of a model checkpoint, or other means that are appropriate to the research performed.
        \item While NeurIPS does not require releasing code, the conference does require all submissions to provide some reasonable avenue for reproducibility, which may depend on the nature of the contribution. For example
        \begin{enumerate}
            \item If the contribution is primarily a new algorithm, the paper should make it clear how to reproduce that algorithm.
            \item If the contribution is primarily a new model architecture, the paper should describe the architecture clearly and fully.
            \item If the contribution is a new model (e.g., a large language model), then there should either be a way to access this model for reproducing the results or a way to reproduce the model (e.g., with an open-source dataset or instructions for how to construct the dataset).
            \item We recognize that reproducibility may be tricky in some cases, in which case authors are welcome to describe the particular way they provide for reproducibility. In the case of closed-source models, it may be that access to the model is limited in some way (e.g., to registered users), but it should be possible for other researchers to have some path to reproducing or verifying the results.
        \end{enumerate}
    \end{itemize}

\item {\bf Open access to data and code}
    \item[] Question: Does the paper provide open access to the data and code, with sufficient instructions to faithfully reproduce the main experimental results, as described in supplemental material?
    \item[] Answer: \answerYes 
    \item[] Justification: See the answer to Question 4. All source code and parameters are supplied in the supplementary zip folder ``powersig-code.zip''.
    \item[] Guidelines:
    \begin{itemize}
        \item The answer NA means that paper does not include experiments requiring code.
        \item Please see the NeurIPS code and data submission guidelines (\url{https://nips.cc/public/guides/CodeSubmissionPolicy}) for more details.
        \item While we encourage the release of code and data, we understand that this might not be possible, so “No” is an acceptable answer. Papers cannot be rejected simply for not including code, unless this is central to the contribution (e.g., for a new open-source benchmark).
        \item The instructions should contain the exact command and environment needed to run to reproduce the results. See the NeurIPS code and data submission guidelines (\url{https://nips.cc/public/guides/CodeSubmissionPolicy}) for more details.
        \item The authors should provide instructions on data access and preparation, including how to access the raw data, preprocessed data, intermediate data, and generated data, etc.
        \item The authors should provide scripts to reproduce all experimental results for the new proposed method and baselines. If only a subset of experiments are reproducible, they should state which ones are omitted from the script and why.
        \item At submission time, to preserve anonymity, the authors should release anonymized versions (if applicable).
        \item Providing as much information as possible in supplemental material (appended to the paper) is recommended, but including URLs to data and code is permitted.
    \end{itemize}

\item {\bf Experimental setting/details}
    \item[] Question: Does the paper specify all the training and test details (e.g., data splits, hyperparameters, how they were chosen, type of optimizer, etc.) necessary to understand the results?
    \item[] Answer: \answerYes 
    \item[] Justification: Yes, see the answers to questions 4 and 5. We also include hardware configuration and JAX optimization settings as a file in our code-documenting zip-folder ``powersig-code.zip''.
    \item[] Guidelines:
    \begin{itemize}
        \item The answer NA means that the paper does not include experiments.
        \item The experimental setting should be presented in the core of the paper to a level of detail that is necessary to appreciate the results and make sense of them.
        \item The full details can be provided either with the code, in appendix, or as supplemental material.
    \end{itemize}

\item {\bf Experiment statistical significance}
    \item[] Question: Does the paper report error bars suitably and correctly defined or other appropriate information about the statistical significance of the experiments?
    \item[] Answer: \answerNA 
    \item[] Justification: We benchmarked 10 rounds per algorithm with 1 warm-up round. We tried adding error bars around the mean but they were so tight that they didn't render well on the graphs so we removed them as they didn't add much value in contrasting the relative performance of the algorithms.
    \item[] Guidelines:
    \begin{itemize}
        \item The answer NA means that the paper does not include experiments.
        \item The authors should answer "Yes" if the results are accompanied by error bars, confidence intervals, or statistical significance tests, at least for the experiments that support the main claims of the paper.
        \item The factors of variability that the error bars are capturing should be clearly stated (for example, train/test split, initialization, random drawing of some parameter, or overall run with given experimental conditions).
        \item The method for calculating the error bars should be explained (closed form formula, call to a library function, bootstrap, etc.)
        \item The assumptions made should be given (e.g., Normally distributed errors).
        \item It should be clear whether the error bar is the standard deviation or the standard error of the mean.
        \item It is OK to report 1-sigma error bars, but one should state it. The authors should preferably report a 2-sigma error bar than state that they have a 96\% CI, if the hypothesis of Normality of errors is not verified.
        \item For asymmetric distributions, the authors should be careful not to show in tables or figures symmetric error bars that would yield results that are out of range (e.g. negative error rates).
        \item If error bars are reported in tables or plots, The authors should explain in the text how they were calculated and reference the corresponding figures or tables in the text.
    \end{itemize}

\item {\bf Experiments compute resources}
    \item[] Question: For each experiment, does the paper provide sufficient information on the computer resources (type of compute workers, memory, time of execution) needed to reproduce the experiments?
    \item[] Answer: \answerYes 
    \item[] Justification: Yes and we used workstations with high consumer grade GPUs.
    \item[] Guidelines:
    \begin{itemize}
        \item The answer NA means that the paper does not include experiments.
        \item The paper should indicate the type of compute workers CPU or GPU, internal cluster, or cloud provider, including relevant memory and storage.
        \item The paper should provide the amount of compute required for each of the individual experimental runs as well as estimate the total compute. 
        \item The paper should disclose whether the full research project required more compute than the experiments reported in the paper (e.g., preliminary or failed experiments that didn't make it into the paper). 
    \end{itemize}
    
\item {\bf Code of ethics}
    \item[] Question: Does the research conducted in the paper conform, in every respect, with the NeurIPS Code of Ethics \url{https://neurips.cc/public/EthicsGuidelines}?
    \item[] Answer: \answerYes 
    \item[] Justification: We have reviewed the NeurIPS Code of Ethics and can confirm our compliance.
    \item[] Guidelines:
    \begin{itemize}
        \item The answer NA means that the authors have not reviewed the NeurIPS Code of Ethics.
        \item If the authors answer No, they should explain the special circumstances that require a deviation from the Code of Ethics.
        \item The authors should make sure to preserve anonymity (e.g., if there is a special consideration due to laws or regulations in their jurisdiction).
    \end{itemize}

\item {\bf Broader impacts}
    \item[] Question: Does the paper discuss both potential positive societal impacts and negative societal impacts of the work performed?
    \item[] Answer: \answerNA 
    \item[] Justification: This paper presents foundational research and is not tied to specific applications, let alone deployments, that we are aware of as having any notable societal impact.
    \item[] Guidelines:
    \begin{itemize}
        \item The answer NA means that there is no societal impact of the work performed.
        \item If the authors answer NA or No, they should explain why their work has no societal impact or why the paper does not address societal impact.
        \item Examples of negative societal impacts include potential malicious or unintended uses (e.g., disinformation, generating fake profiles, surveillance), fairness considerations (e.g., deployment of technologies that could make decisions that unfairly impact specific groups), privacy considerations, and security considerations.
        \item The conference expects that many papers will be foundational research and not tied to particular applications, let alone deployments. However, if there is a direct path to any negative applications, the authors should point it out. For example, it is legitimate to point out that an improvement in the quality of generative models could be used to generate deepfakes for disinformation. On the other hand, it is not needed to point out that a generic algorithm for optimizing neural networks could enable people to train models that generate Deepfakes faster.
        \item The authors should consider possible harms that could arise when the technology is being used as intended and functioning correctly, harms that could arise when the technology is being used as intended but gives incorrect results, and harms following from (intentional or unintentional) misuse of the technology.
        \item If there are negative societal impacts, the authors could also discuss possible mitigation strategies (e.g., gated release of models, providing defenses in addition to attacks, mechanisms for monitoring misuse, mechanisms to monitor how a system learns from feedback over time, improving the efficiency and accessibility of ML).
    \end{itemize}
    
\item {\bf Safeguards}
    \item[] Question: Does the paper describe safeguards that have been put in place for responsible release of data or models that have a high risk for misuse (e.g., pretrained language models, image generators, or scraped datasets)?
    \item[] Answer: \answerNA 
    \item[] Justification: To the best of our knowledge, the release of our algorithm does not pose any immediate high risk of misuse.
    \item[] Guidelines:
    \begin{itemize}
        \item The answer NA means that the paper poses no such risks.
        \item Released models that have a high risk for misuse or dual-use should be released with necessary safeguards to allow for controlled use of the model, for example by requiring that users adhere to usage guidelines or restrictions to access the model or implementing safety filters. 
        \item Datasets that have been scraped from the Internet could pose safety risks. The authors should describe how they avoided releasing unsafe images.
        \item We recognize that providing effective safeguards is challenging, and many papers do not require this, but we encourage authors to take this into account and make a best faith effort.
    \end{itemize}

\item {\bf Licenses for existing assets}
    \item[] Question: Are the creators or original owners of assets (e.g., code, data, models), used in the paper, properly credited and are the license and terms of use explicitly mentioned and properly respected?
    \item[] Answer: \answerYes 
    \item[] Justification: Creators or original owners of assets used in the paper are properly credited.
    \item[] Guidelines:
    \begin{itemize}
        \item The answer NA means that the paper does not use existing assets.
        \item The authors should cite the original paper that produced the code package or dataset.
        \item The authors should state which version of the asset is used and, if possible, include a URL.
        \item The name of the license (e.g., CC-BY 4.0) should be included for each asset.
        \item For scraped data from a particular source (e.g., website), the copyright and terms of service of that source should be provided.
        \item If assets are released, the license, copyright information, and terms of use in the package should be provided. For popular datasets, \url{paperswithcode.com/datasets} has curated licenses for some datasets. Their licensing guide can help determine the license of a dataset.
        \item For existing datasets that are re-packaged, both the original license and the license of the derived asset (if it has changed) should be provided.
        \item If this information is not available online, the authors are encouraged to reach out to the asset's creators.
    \end{itemize}

\item {\bf New assets}
    \item[] Question: Are new assets introduced in the paper well documented and is the documentation provided alongside the assets?
    \item[] Answer: \answerYes 
    \item[] Justification: The only new asset we introduce is the library for computing signature kernels and the benchmarks themselves. It itself only relies on other open-source libraries for implementation and benchmarking. 
    \item[] Guidelines:
    \begin{itemize}
        \item The answer NA means that the paper does not release new assets.
        \item Researchers should communicate the details of the dataset/code/model as part of their submissions via structured templates. This includes details about training, license, limitations, etc. 
        \item The paper should discuss whether and how consent was obtained from people whose asset is used.
        \item At submission time, remember to anonymize your assets (if applicable). You can either create an anonymized URL or include an anonymized zip file.
    \end{itemize}

\item {\bf Crowdsourcing and research with human subjects}
    \item[] Question: For crowdsourcing experiments and research with human subjects, does the paper include the full text of instructions given to participants and screenshots, if applicable, as well as details about compensation (if any)? 
    \item[] Answer: \answerNA 
    \item[] Justification: The paper does not involve crowdsourcing nor research with human subjects.
    \item[] Guidelines:
    \begin{itemize}
        \item The answer NA means that the paper does not involve crowdsourcing nor research with human subjects.
        \item Including this information in the supplemental material is fine, but if the main contribution of the paper involves human subjects, then as much detail as possible should be included in the main paper. 
        \item According to the NeurIPS Code of Ethics, workers involved in data collection, curation, or other labor should be paid at least the minimum wage in the country of the data collector. 
    \end{itemize}

\item {\bf Institutional review board (IRB) approvals or equivalent for research with human subjects}
    \item[] Question: Does the paper describe potential risks incurred by study participants, whether such risks were disclosed to the subjects, and whether Institutional Review Board (IRB) approvals (or an equivalent approval/review based on the requirements of your country or institution) were obtained?
    \item[] Answer: \answerNA 
    \item[] Justification: The paper does not involve crowdsourcing nor research with human subjects.
    \item[] Guidelines:
    \begin{itemize}
        \item The answer NA means that the paper does not involve crowdsourcing nor research with human subjects.
        \item Depending on the country in which research is conducted, IRB approval (or equivalent) may be required for any human subjects research. If you obtained IRB approval, you should clearly state this in the paper. 
        \item We recognize that the procedures for this may vary significantly between institutions and locations, and we expect authors to adhere to the NeurIPS Code of Ethics and the guidelines for their institution. 
        \item For initial submissions, do not include any information that would break anonymity (if applicable), such as the institution conducting the review.
    \end{itemize}

\item {\bf Declaration of LLM usage}
    \item[] Question: Does the paper describe the usage of LLMs if it is an important, original, or non-standard component of the core methods in this research? Note that if the LLM is used only for writing, editing, or formatting purposes and does not impact the core methodology, scientific rigorousness, or originality of the research, declaration is not required.
    \item[] Answer: \answerNA 
    \item[] Justification: The core method development in this research does not involve LLMs as any important, original, or non-standard components.
    \item[] Guidelines:
    \begin{itemize}
        \item The answer NA means that the core method development in this research does not involve LLMs as any important, original, or non-standard components.
        \item Please refer to our LLM policy (\url{https://neurips.cc/Conferences/2025/LLM}) for what should or should not be described.
    \end{itemize}

\end{enumerate}

\end{document}